\title{Internal waves in 2D domains with ergodic classical dynamics}
\email{yves.colin-de-verdiere@univ-grenoble-alpes.fr}
\address{Universit\'e Grenoble-Alpes,
Institut Fourier,
 Unit{\'e} mixte
 de recherche CNRS-UGA 5582,
 BP 74, 38402-Saint Martin d'H\`eres Cedex (France)}
\author{Yves Colin de Verdi{\`e}re and Zhenhao Li}
\email{zhenhao@mit.edu}
\address{Department of Mathematics, Massachusetts Institute of Technology, Cambridge, MA 02139 (U.S.)}
\def\gl{\lambda}
\begin{document}

\maketitle
\begin{abstract}
    We study a model of internal waves in an effectively 2D aquarium under periodic forcing. In the case when the underlying classical dynamics has sufficiently irrational rotation number, we prove that the energy of the internal waves remains bounded. This involves studying the spectrum of a related 0-th order pseudodifferential operator at spectral parameters corresponding to such dynamics. For the special cases of rectangular and elliptic domains, we give an explicit spectral description of that operator.
\end{abstract}
\section{Introduction}
Below the surface layers of the ocean, the density field can be be approximated by a stable-stratified field. This means that the density depends only on depth and increases slowly with it. A standard model for internal waves is given by 
considering linear perturbations of such stable-stratified fluids, and is a central topic in oceanography. These perturbations occur naturally and can arise mechanically or thermodynamically. For a more complete introduction to the physics behind internal waves, see Mass \cite{Maas_survey} and Sibgatullin--Ermanyuk \cite{SE_survey}.


In this paper, we consider an open, bounded, and simply-connected domain $\Omega \subset \R^2_{x_1, x_2}$ with a smooth boundary. Internal waves are modeled by the equation
\begin{equation}\label{eq:internal_waves}
    (\partial_t^2 \Delta + \partial_{x_2}^2) u = f(x) \cos (\lambda t), \quad u|_{t = 0} = \partial_t u|_{t = 0} = 0, \quad u|_{\partial \Omega } = 0
\end{equation}
with $\lambda \in (0, 1)$ and $f \in C^\infty(\overline \Omega; \R)$. This is the Poincar\'e equation \cite{Po}, also called the Sobolev equation \cite{sobolev}. This problem comes from the study of internal waves in a 2D aquarium with a constant Brunt-Va\"is\"al\"a frequency which we take equal to $1$. The solution $u$ represents the \textit{stream function} of the fluid velocity, meaning the velocity field is given by 
\begin{equation}\label{eq:vf}
    \mathbf v = (\partial_{x_2} u, -\partial_{x_1} u).
\end{equation}
Then \eqref{eq:internal_waves} can be interpreted as the evolution of the stream function under periodic forcing in the interior of the domain with forcing profile $f$. The Dirichlet boundary condition is simply saying that velocity of the fluid near the boundary must be tangent to the boundary, i.e. no forcing from the boundary. Let $g \in C^\infty(\partial \Omega; \R)$, and let $\tilde g|_{\partial \Omega} =  g$ and $\Delta \tilde g = 0$. Then the boundary forced equation 
\[(\partial_t^2 \Delta + \partial_{x_2}^2) u = 0, \quad u|_{t = 0} = \tilde g, \quad \partial_t u|_{t = 0} = 0, \quad u|_{\partial \Omega } = g(x) \cos(\lambda t)\]
can be easily reduced to \eqref{eq:internal_waves} by replacing $u(x, t)$ with $u(x, t) - \tilde g(x) \cos (\lambda t)$.

\begin{figure}
    \centering
    \begin{subfigure}{.16\textwidth}
        \centering
        \includegraphics[width=\linewidth]{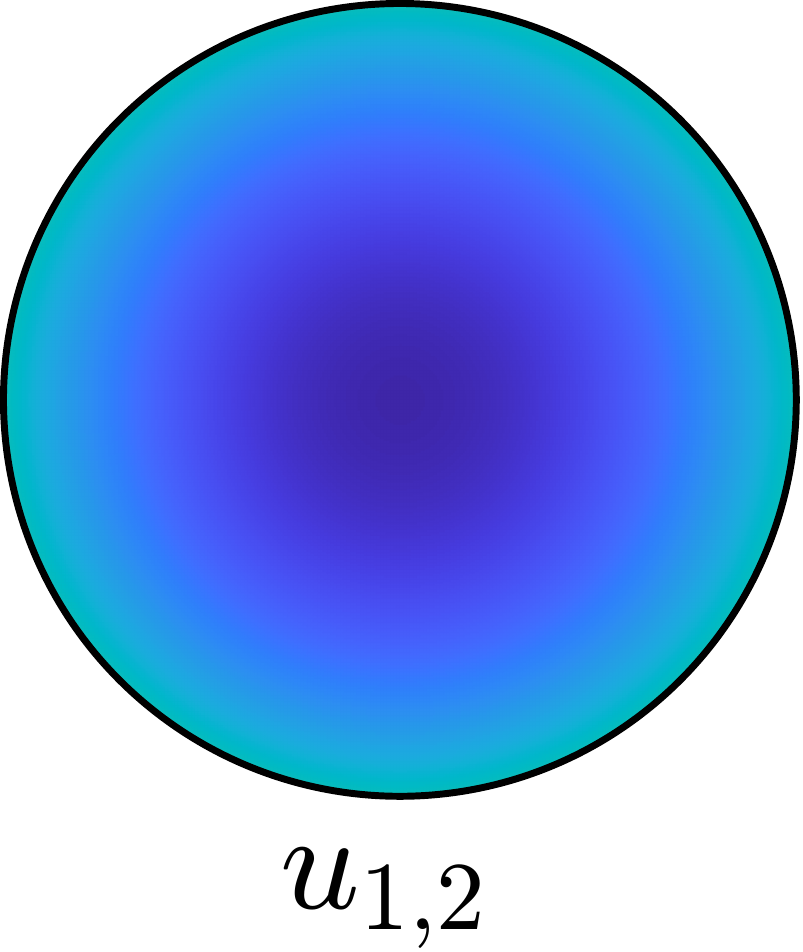}
    \end{subfigure}
    \begin{subfigure}{.16\textwidth}
        \centering
        \includegraphics[width=\linewidth]{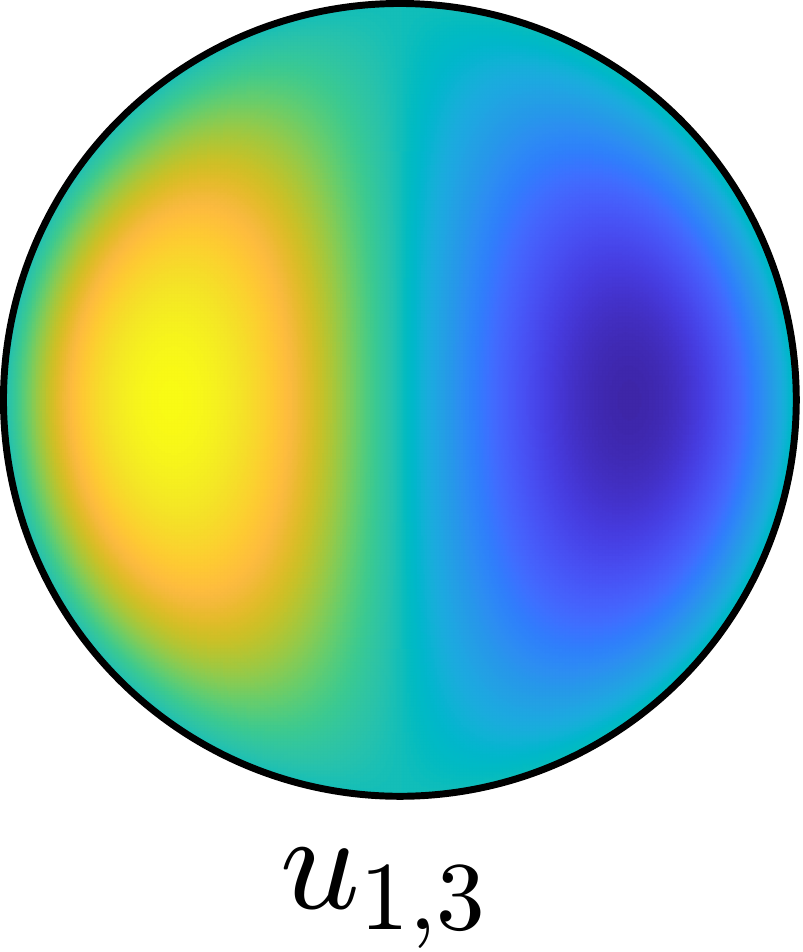}
    \end{subfigure}
    \begin{subfigure}{.16\textwidth}
        \centering
        \includegraphics[width=\linewidth]{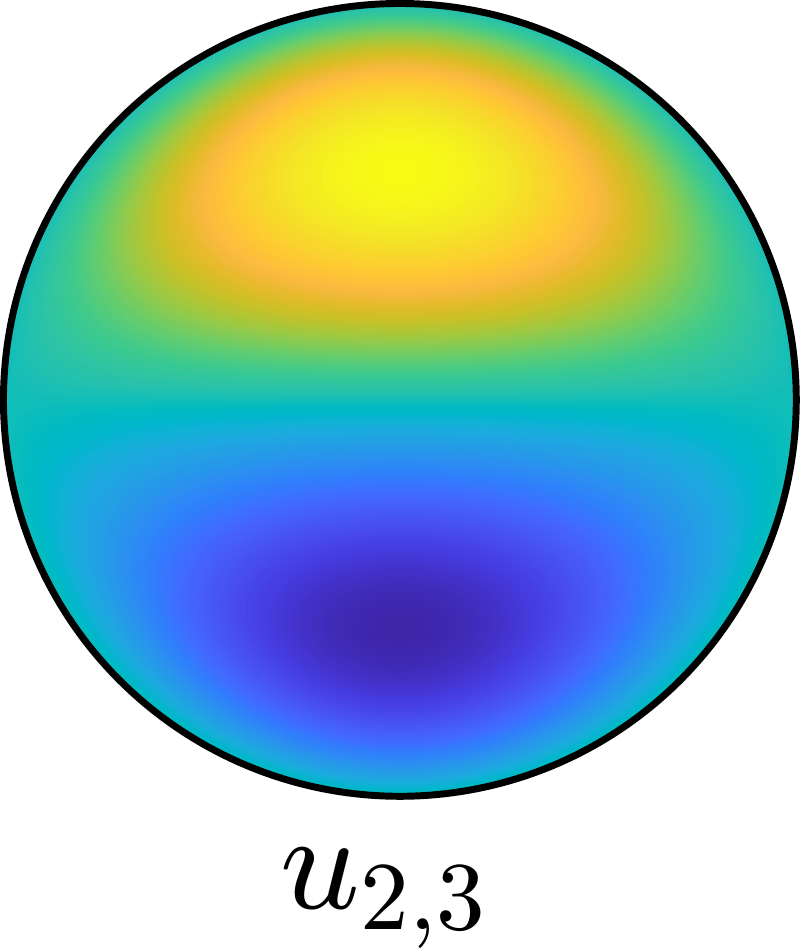}
    \end{subfigure}
    \begin{subfigure}{.16\textwidth}
        \centering
        \includegraphics[width=\linewidth]{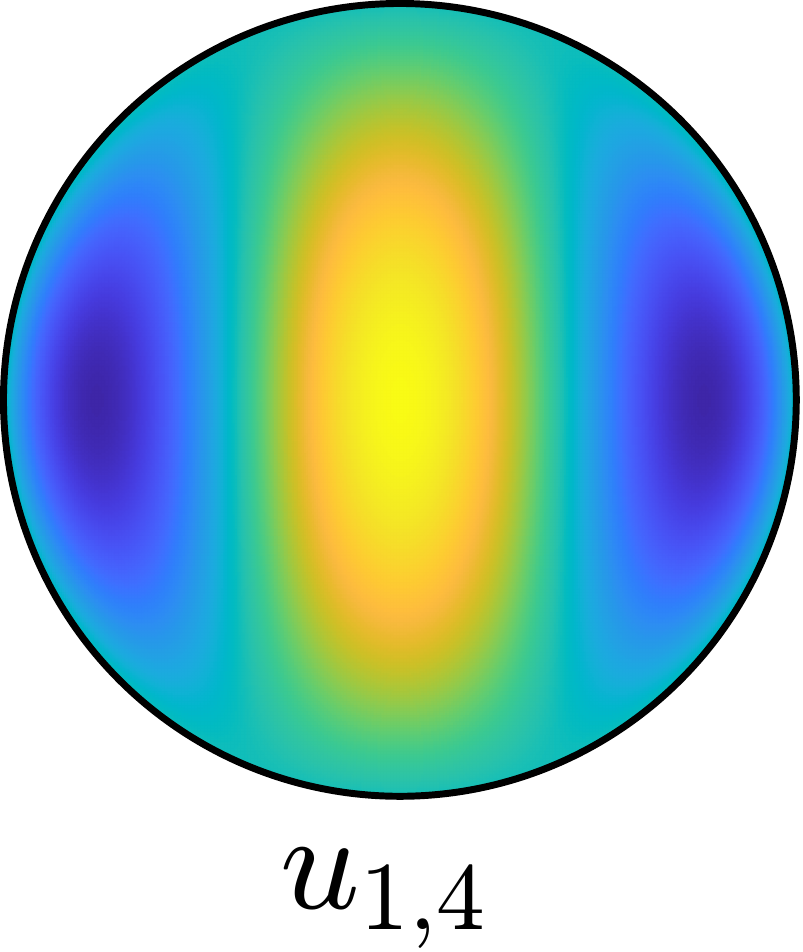}
    \end{subfigure}
    \begin{subfigure}{.16\textwidth}
        \centering
        \includegraphics[width=\linewidth]{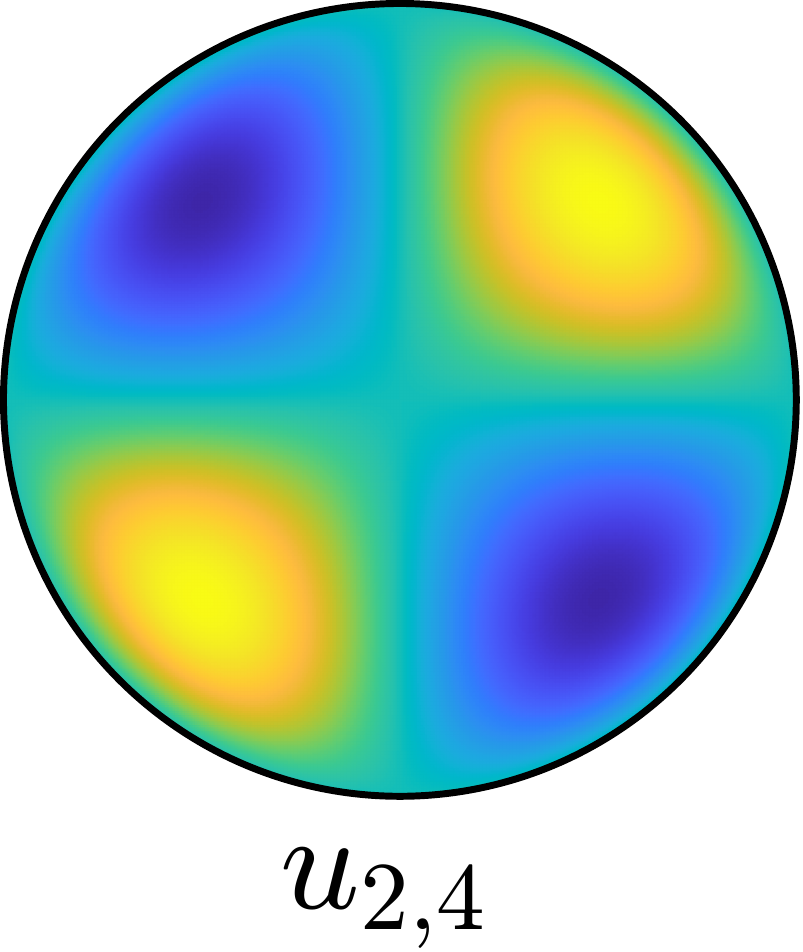}
    \end{subfigure}
    \begin{subfigure}{.16\textwidth}
        \centering
        \includegraphics[width=\linewidth]{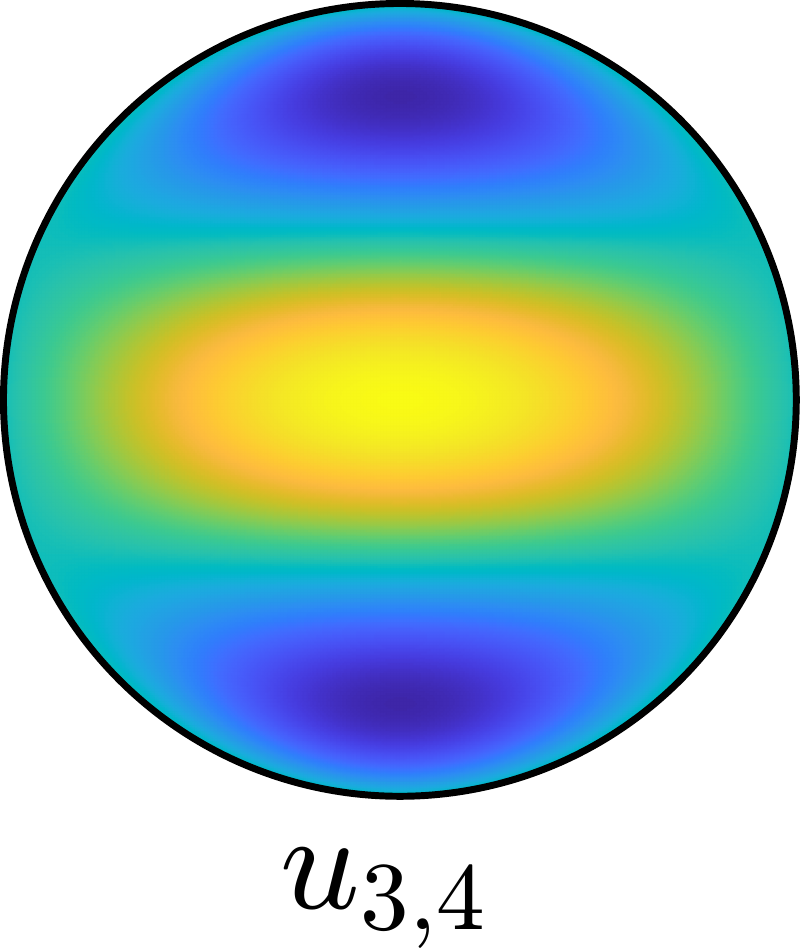}
    \end{subfigure}

    \begin{subfigure}{.16\textwidth}
        \centering
        \includegraphics[width=\linewidth]{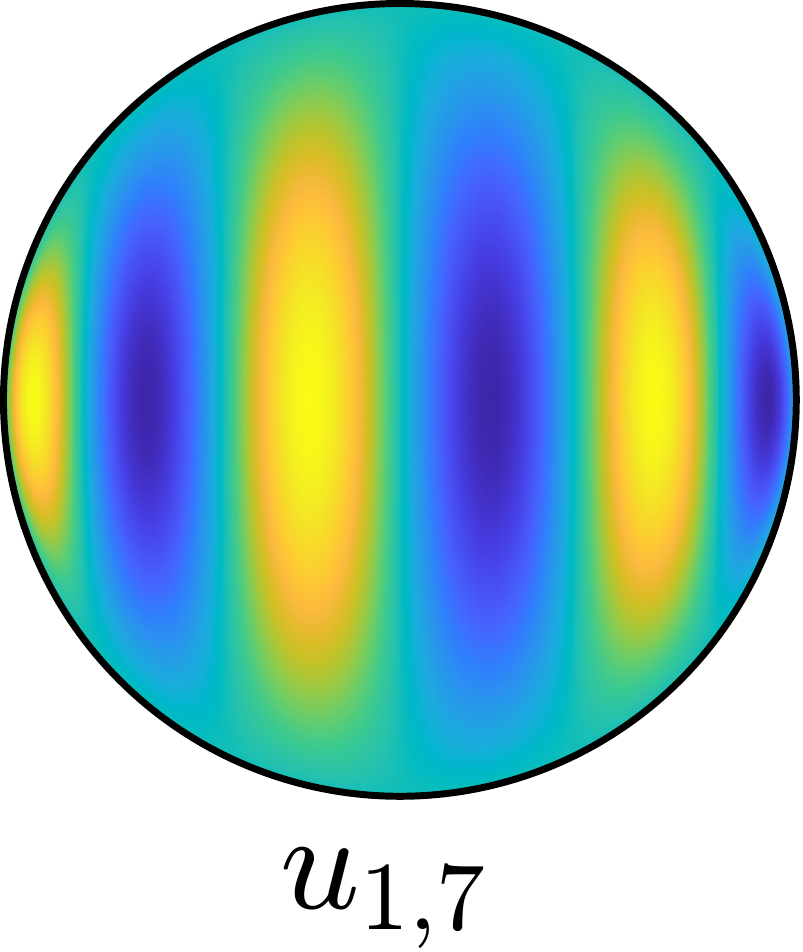}
    \end{subfigure}
    \begin{subfigure}{.16\textwidth}
        \centering
        \includegraphics[width=\linewidth]{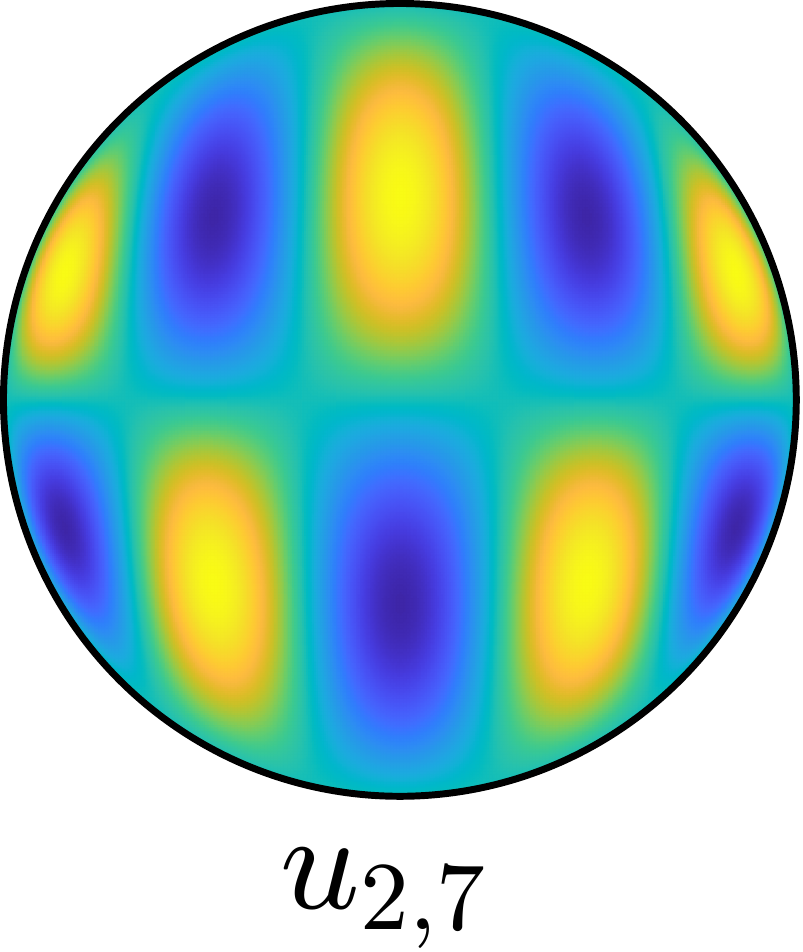}
    \end{subfigure}
    \begin{subfigure}{.16\textwidth}
        \centering
        \includegraphics[width=\linewidth]{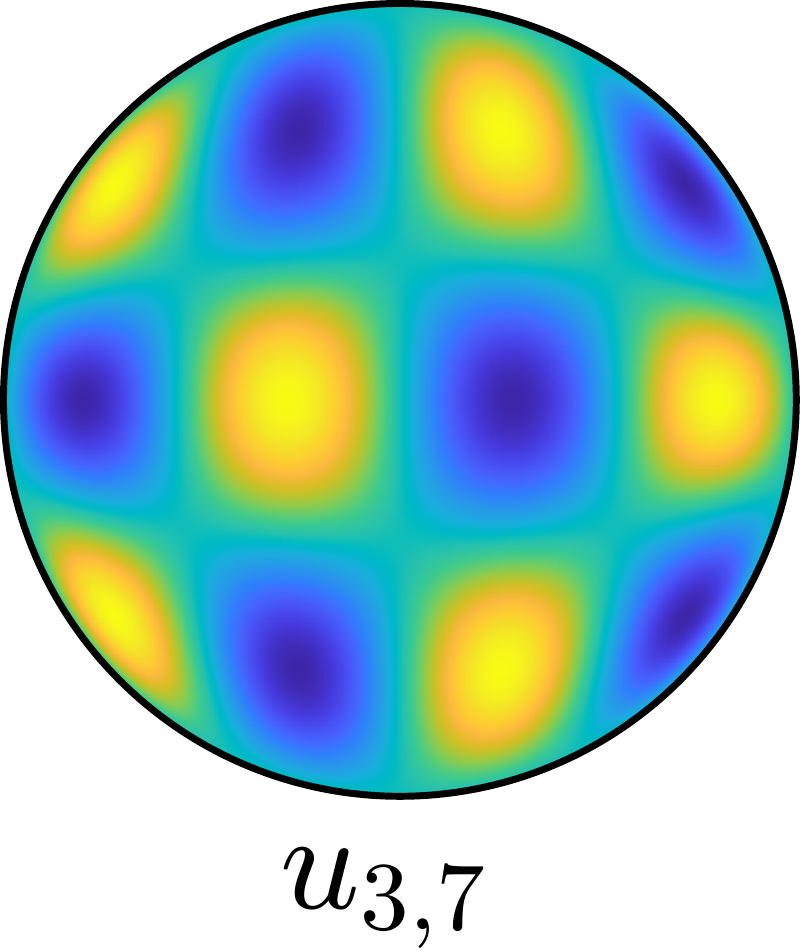}
    \end{subfigure}
    \begin{subfigure}{.16\textwidth}
        \centering
        \includegraphics[width=\linewidth]{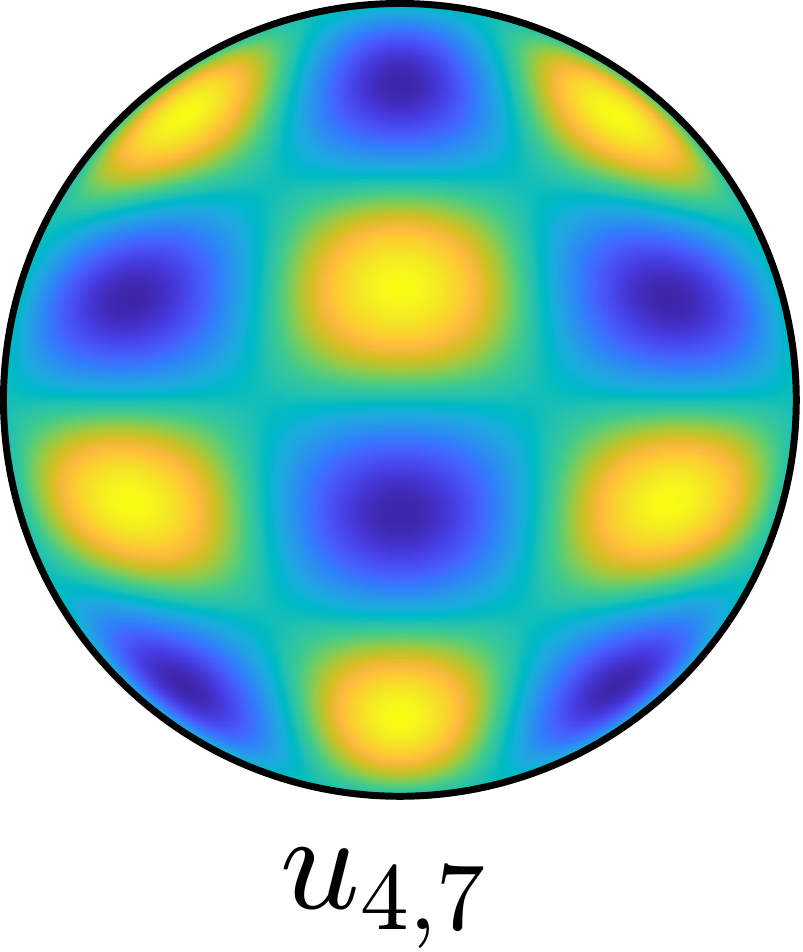}
    \end{subfigure}
    \begin{subfigure}{.16\textwidth}
        \centering
        \includegraphics[width=\linewidth]{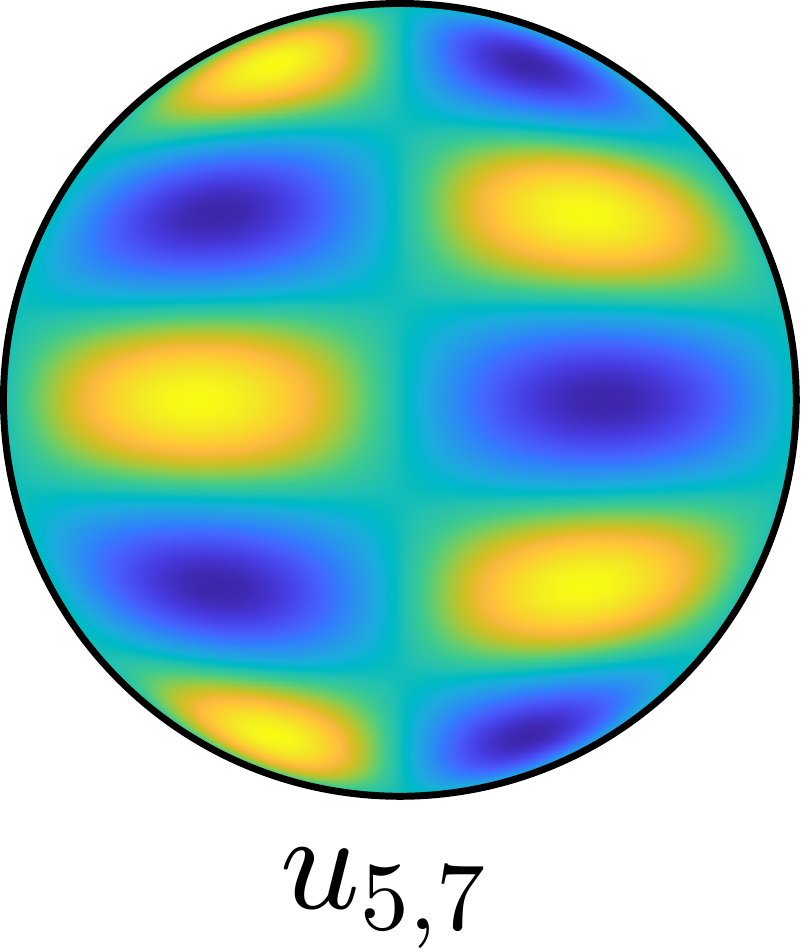}
    \end{subfigure}
    \begin{subfigure}{.16\textwidth}
        \centering
        \includegraphics[width=\linewidth]{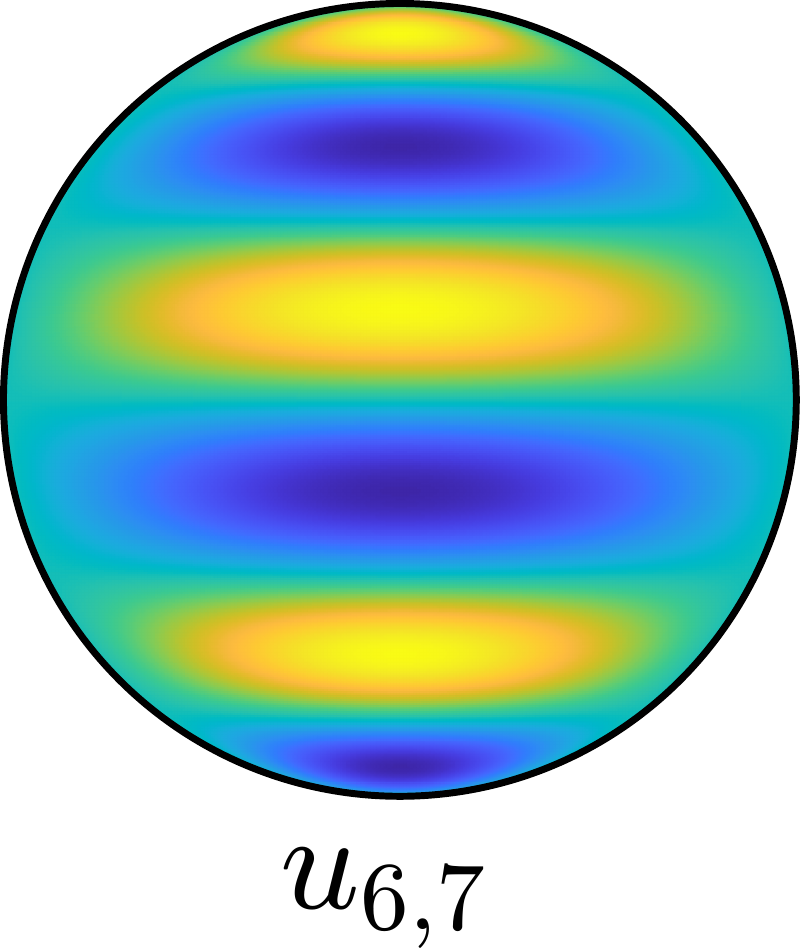}
    \end{subfigure}
    \newline
    
    \vspace{1mm}
    \begin{subfigure}{.3\textwidth}
        \centering
        \includegraphics[width=.9\linewidth]{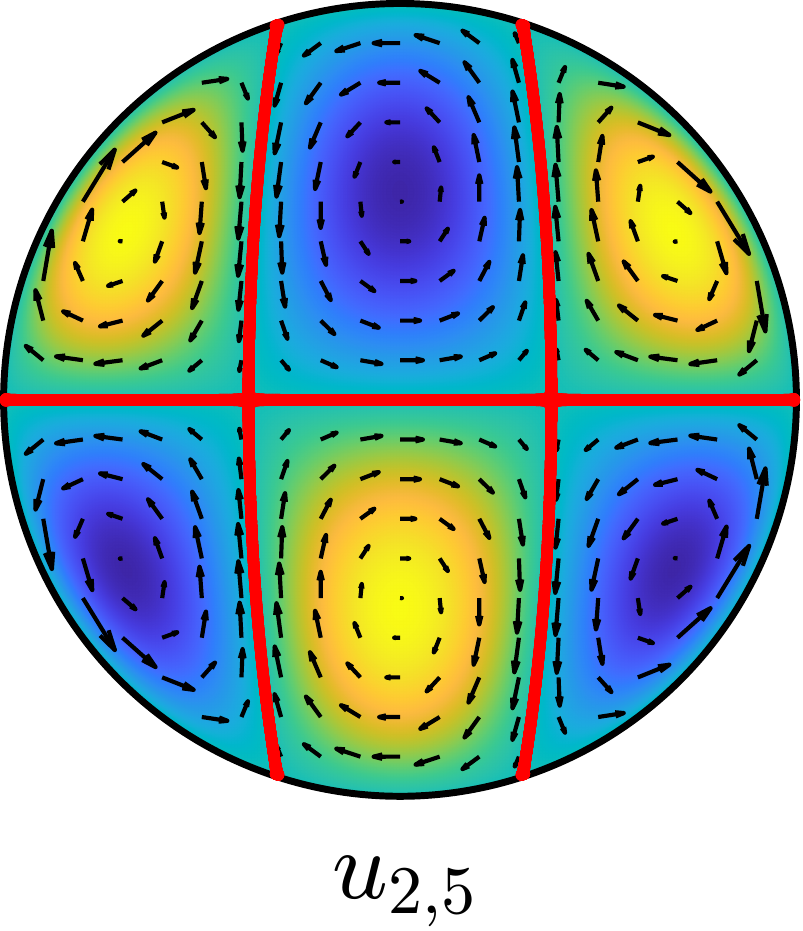}
    \end{subfigure}
    \begin{subfigure}{0.2\textwidth}
        \centering
        \includegraphics[scale = 0.4]{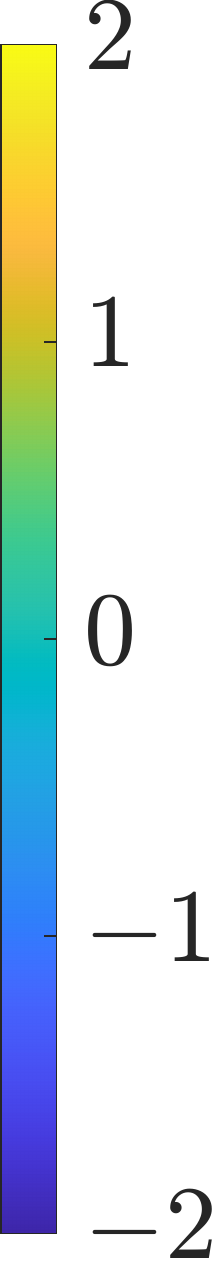}
    \end{subfigure}
    \caption{Plots of
$u = \Delta^{-1}_\omega w$ ($ \Delta_\Omega $ is the Dirichlet Laplacian)
where $w$ is an eigenfunction of the Poincar\'e operator
$P = \partial_{x_2}^2 \Delta_\Omega^{-1}$, see \S \ref{sec:disk}. We also plot the velocity field of $u_{2, 5}$ given by \eqref{eq:vf}. It is clear that the integral curves are simply the level sets of $u_{k, N}$, the zero set (excluding the boundary) is highlighted in red above.} 
    \label{fig:eigfuncs}
\end{figure}

Rewriting \eqref{eq:vf} so that \eqref{eq:internal_waves} reads like an evolution problem, we define 
\begin{equation}\label{eq:P_def}
    P := \partial_{x_2}^2 \Delta_\Omega^{-1}: H^{-1}(\Omega) \to H^{-1} (\Omega), \quad \langle u, w \rangle_{H^{-1}(\Omega)} := \langle \nabla \Delta_\Omega^{-1} u, \nabla \Delta_\Omega^{-1} w \rangle_{L^2(\Omega)}.
\end{equation}
where $\Delta_\Omega$ denotes the Dirichlet Laplacian. Then $w := \Delta u$ satisfies the equation
\begin{equation}\label{eq:evolution_problem}
    (\partial_t^2 + P) w = f \cos \lambda t, \quad w|_{t = 0} = \partial_t w|_{t = 0} = 0, \quad f \in \CIc(\Omega; \R), \quad u = \Delta_\Omega^{-1} w. 
\end{equation}
The operator $P$ is bounded and self-adjoint, see~\cite{Ralston_73}. The solution $w(t)$ can then be written using the functional calculus:
\begin{equation}\label{eq:functional_solution}
    \begin{gathered}
        w(t) = \frac{\cos (t \sqrt{P}) - \cos(t \lambda)}{\lambda^2 - P} f.
    \end{gathered}
\end{equation}
We are interested in its long-time behavior which equivalent to the long time behavior of the solution $u$ to \eqref{eq:internal_waves}. As $t \to \infty$ the functional solution \eqref{eq:functional_solution} becomes singular when the spectral parameter is equal to $\lambda^2$. Therefore studying the long-time behavior is closely related top the spectral properties of $P$ at $\lambda^2$.

Our main result concerns the behavior of $u$ when the underlying classical dynamics is ergodic. The relevant dynamics at forcing frequency $\lambda$ is given by the chess billiard map, which is a $\lambda$-dependent family of circle diffeomorphism $b(\bullet, \lambda) : \partial \Omega \to \partial \Omega$ (see \eqref{eq:b_def} for details and \S\ref{sec:geometric_assumptions} for motivation). To every $b(\bullet, \lambda)$, we may assign a rotation number $\mathbf r(\lambda) \in [0, 1]$, which measures the average rotation of $b(\bullet, \lambda)$, see \eqref{eq:rot_num_def} for the precise definition. When the rotation number $\mathbf r(\lambda)$ is irrational, the map $b(\bullet, \lambda)$ is ergodic, which is the setting we will focus on.

Finally, we need a natural geometric assumption on the domain $\Omega$, called $\lambda$-simplicity \cite{DWZ}, see Definition \ref{lambda_simple_def}. For now, we emphasize that all strictly convex domains satisfy this assumption for all $\lambda \in (0, 1)$.

\begin{theorem}\label{thm:ergodic}
Let $\Omega \subset \R^2$ be open, bounded, and simply connected. Assume that $\lambda \in (0, 1)$ is such that $\Omega$ is $\lambda$-simple. 
\begin{enumerate}[label = (\alph*)]
    \item If the rotation number $\mathbf r (\lambda)$ is irrational, then $\lambda^2$ is not in the pure point spectrum of $P$, i.e. $(P - \lambda^2): H^{-1}(\Omega) \to H^{-1}(\Omega)$ is injective. 

    \item If the rotation number $\mathbf r(\lambda )$ is Diophantine (see Definition \ref{def:diophantine}), the solution $u(t)$ to \eqref{eq:internal_waves} remains bounded in energy space for all times, i.e. there exists a constant $C > 0$ such that
    \begin{equation}
        \|u(t)\|_{H^1_0(\Omega)} \le C
    \end{equation}
    for all $t \in \R$.

    \item If the rotation number  $\mathbf r (\gl )$ is Diophantine, the spectral measure $\mu _f $ of $f$ satisfies, for all $N\in \N$, 
    \[ \mu _f \left([\gl^2 -\epsilon,\gl^2 + \epsilon] \right)= O\left(\epsilon^N\right) \]
\end{enumerate}  
\end{theorem}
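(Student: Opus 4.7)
The plan is to derive (c) from a stronger range statement for powers of $P-\lambda^2$. Concretely, I would show that for every $N \in \mathbb{N}$ there exists $g_N \in H^{-1}(\Omega)$ with $f = (P - \lambda^2)^N g_N$. Granted this, the spectral theorem for the self-adjoint operator $P$ on $H^{-1}(\Omega)$ gives
\[
\mu_f\bigl([\lambda^2-\epsilon,\lambda^2+\epsilon]\bigr) = \int_{\lambda^2-\epsilon}^{\lambda^2+\epsilon} (\sigma - \lambda^2)^{2N}\,d\mu_{g_N}(\sigma) \le \epsilon^{2N}\,\|g_N\|_{H^{-1}}^2,
\]
which is in fact stronger than the claimed $O(\epsilon^N)$ bound.

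The core task is then to solve $(P - \lambda^2) g = h$ with $g$ of comparable regularity to $h$, and to iterate starting from $h = f$. Under $\lambda$-simplicity, the microlocal analysis already used for (a) and (b) reduces $P - \lambda^2$, modulo a smoothing operator, to a model pseudodifferential operator on $\partial\Omega$ whose underlying dynamics is the chess billiard map $b(\bullet,\lambda)$. Solving $(P - \lambda^2) g = h$ then amounts, up to a smoothing remainder and a one-dimensional solvability obstruction, to solving a cohomological equation $(B - \mathrm{Id})\psi = \varphi$ on the boundary circle, where $B$ is the pullback by $b(\bullet,\lambda)$.

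When $\mathbf r(\lambda)$ is Diophantine, Herman--Yoccoz linearizes $b(\bullet,\lambda)$ smoothly to the rotation by $\mathbf r(\lambda)$, and a standard small-divisor Fourier argument then inverts $B - \mathrm{Id}$ on smooth mean-zero data with a fixed finite loss of derivatives (controlled by the Diophantine exponent). Because $f \in C^\infty_c(\Omega)$, the iterated right-hand sides remain in $C^\infty$, the per-step derivative loss is absorbed, and the unique ergodicity of $b(\bullet,\lambda)$ shows that the solvability obstruction is the same one-dimensional condition that underlies the absence of an eigenvalue in (a); thus it vanishes at each iteration.

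The main obstacle is propagating the smoothing remainders through $N$ iterations while simultaneously maintaining the mean-zero condition and an $H^{-1}$-control of the iterates $g_N$; the constant $\|g_N\|_{H^{-1}}^2$ will inevitably deteriorate with $N$, but this is harmless because $N$ is chosen after $\epsilon$. Structurally the argument is a tame KAM-style iteration on the boundary circle, driven by the microlocal reduction that turns the chess billiard map into the relevant classical dynamics of $P$.
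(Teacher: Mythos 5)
Your endgame is exactly the paper's: once one has $g_N$ with $f=(P-\lambda^2)^N g_N$ and $g_N\in H^{-1}(\Omega)$, the identity $\mu_f([\lambda^2-\epsilon,\lambda^2+\epsilon])=\int_{\lambda^2-\epsilon}^{\lambda^2+\epsilon}(\sigma-\lambda^2)^{2N}\,d\mu_{g_N}(\sigma)\le \epsilon^{2N}\|g_N\|_{H^{-1}}^2$ is precisely how the paper proves (c), with $g_N=(\Delta_\Omega R(\lambda))^N f$. Where you diverge is in how the right inverse is produced. You propose the microlocal route: reduce $P-\lambda^2$ modulo smoothing to a model pseudodifferential operator on $\partial\Omega$ and solve a cohomological equation up to remainders --- this is essentially the earlier approach of \cite{DWZ, Zhenhao}, which the paper is explicitly trying to replace. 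The paper instead uses that $P(\lambda)=(P-\lambda^2)\Delta$ is \emph{exactly} the $1{+}1$ wave operator $\partial_{y_+}\partial_{y_-}$ in the characteristic coordinates \eqref{eq:y_coord}: a particular solution is obtained by two explicit integrations of an extension of $f$, and the correction is an exact solution $v_+(y_+)+v_-(y_-)$ of the homogeneous equation whose boundary traces solve an exact (no remainder) cohomological equation for $b$. This buys two things your sketch has to work for: there are no smoothing remainders to propagate, and no iteration beyond composing a single fixed operator $N$ times --- in particular the ``tame KAM-style iteration'' is unnecessary, since each application of the right inverse loses only a fixed finite number of derivatives and $f$ is smooth.

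The one genuine soft spot in your proposal is the solvability obstruction. You assert that the mean-zero condition needed to invert $B-\mathrm{Id}$ ``is the same one-dimensional condition that underlies the absence of an eigenvalue in (a)'' and hence vanishes; that is not a proof, and it is not the same condition. In the paper the data of the cohomological equation is $(\gamma^\mp)^*U_0-U_0$, and its vanishing mean in the Herman--Yoccoz linearizing coordinate $\theta$ requires an actual argument: from $b\circ\gamma^+=\gamma^+\circ b^{-1}$ one deduces $\partial_\theta\gamma^\pm\equiv-1$ (using irrationality of $\mathbf r(\lambda)$ and that $\gamma^\pm$ are orientation-reversing involutions), which makes $\gamma^\pm$ measure-preserving for $d\theta$ and hence kills the mean. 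You would need to supply this (or its analogue in your pseudodifferential framework, where the obstruction is usually handled via a Grushin reduction) before the iteration can start. A second, smaller omission: after solving on the boundary you must check that the $\gamma^\pm$-invariant traces $V_\pm$ actually extend to smooth functions of $y_\pm$ on $\overline\Omega$; this uses the nondegeneracy of the critical points of $\ell^\pm|_{\partial\Omega}$ from $\lambda$-simplicity and is where that hypothesis enters. Finally, note that your proposal addresses only part (c); parts (a) and (b) are taken as given.
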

Parts (b) and (c) of Theorem \ref{thm:ergodic} were first proved by 
the second named author \cite{Zhenhao}
(Theorem 2 and Theorem 1 respectively). This was done by studying a boundary reduced 0-th order pseudo-differential operator on the circle, found in \cite{DWZ}. The assumptions in (a) are weaker than the corresponding assumption of \cite[Lemma~6.1]{Zhenhao},  which assumes the eigenfunction to be smooth. It was already proven in \cite[Theorem 10]{arnold}, see also \cite{john}.

In this paper, we give a simpler proof by directly constructing an inverse to a related eigenvalue problem (see~\eqref{eq:eig_problem}). We also describe the spectrum in two simple cases: rectangular and elliptic domains;
the latter is related to the recent work of the first named author and Vidal
\cite{C-V} which studied the case rotating 
fluids in ellipsoids.

The spectral measure of $P$ can be numerically approximated using the methods developed in \cite{Colbrook21a,Colbrook21b,Colbrook22}. Theorem \ref{thm:ergodic} still holds for the square even though the boundary is not smooth. In that case, the rotation number can be computed as an explicit smooth function of $\lambda$, and so the spectral bound can be seen numerically in Figure \ref{fig:spectral_bound}.

Two examples where we can explicitly compute the spectrum of $P$ are given
by $\Omega = [0, 1] \times [0, 1]$ and $\Omega = \mathbb D$,  the unit disk. For the square, the eigenfunctions are simply the Fourier modes. For the circle, we will give an explicit complete basis of $H^1_0$ consisting of solutions to the eigenvalue problem \eqref{eq:eig_problem}, see \eqref{basis} and Figure \ref{fig:eigfuncs}. Then by changing the coordinates, we have spectral information about elliptic and rectangular domains.

\begin{figure}
    \centering
    \includegraphics[width = \textwidth]{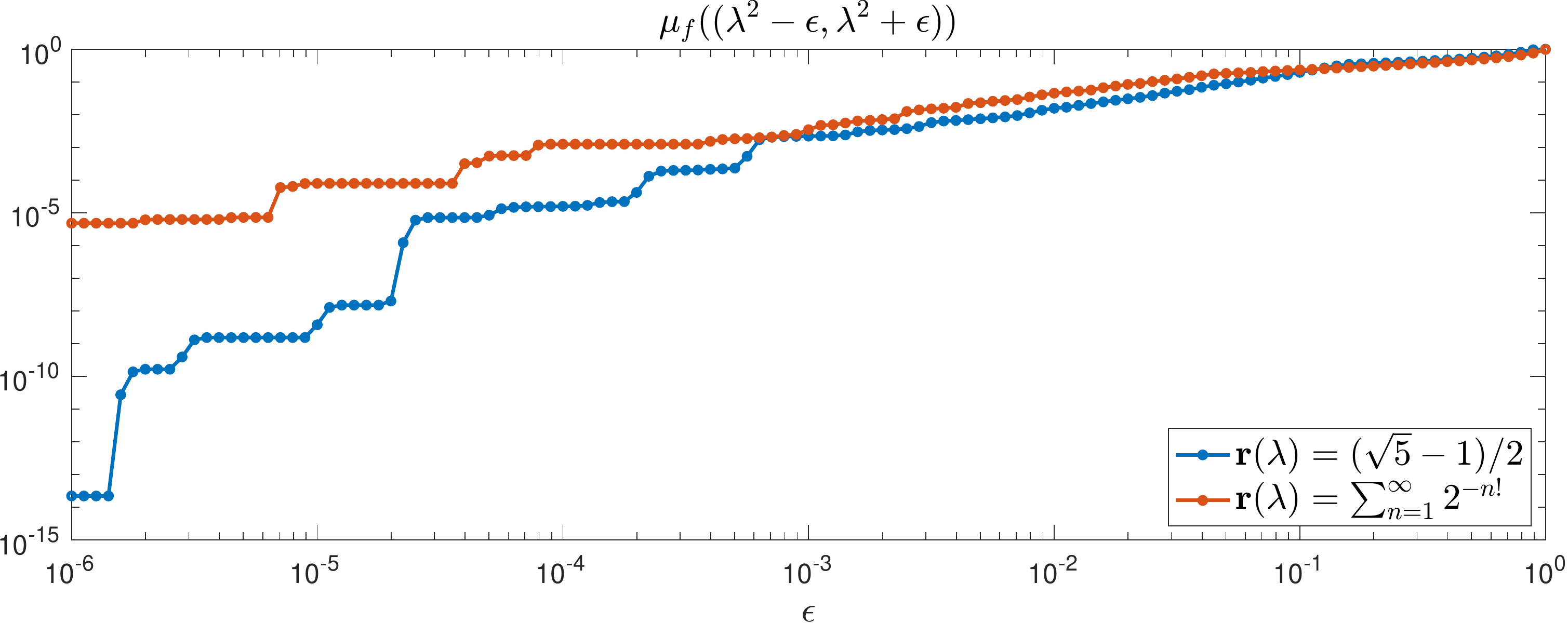}
    \caption{Numerical evidence for the relevance of the Diophantine assumption in part (c) of Theorem \ref{thm:ergodic}. Here $\Omega = [0, 1] \times [0, 1]$. The spectral measure of a $2\epsilon$ interval centered at $\lambda^2$ with $\mathbf r(\lambda)$ Diophantine decays much faster than if $\mathbf r(\lambda)$ is irrational but not Diophantine. It is clear that $\sum_{n = 1}^\infty 2^{-n!}$, which is a base 2 version of Liouville's constant, cannot satisfy Definition \ref{def:diophantine} for any choice of constants $c$ or $\beta$. The data presented is courtesy of Matthew Colbrook.}
    \label{fig:spectral_bound}
\end{figure}

\begin{theorem} \label{thm:ellipse}
If $\Omega$ is an ellipse or if $\Omega$ is a rectangle with sides parallel to the coordinate axis, the spectrum of $P$ is pure point dense in $[0,1]$ and the eigenvalues are exactly the values of $\lambda$ for which the rotation number $\mathbf r(\lambda) $ is rational. 
\end{theorem}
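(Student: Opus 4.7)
The plan is to obtain both cases of Theorem \ref{thm:ellipse} by explicit diagonalization of $P$ using a complete basis of eigenfunctions, and then match the eigenvalues with the rationality of the rotation number by analyzing the chess billiard.

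For the rectangle $\Omega = [0,a]\times[0,b]$, the Dirichlet eigenfunctions $e_{m,n}(x) := \sin(m\pi x_1/a)\sin(n\pi x_2/b)$, $m,n\ge 1$, simultaneously diagonalize $\Delta_\Omega$ and $\partial_{x_2}^2$, hence $P$, with eigenvalue
\[
\lambda_{m,n}^2 := \frac{n^2/b^2}{m^2/a^2 + n^2/b^2} \in (0,1).
\]
Since $\{e_{m,n}\}$ is an orthogonal basis of $L^2(\Omega)$ and hence of $H^{-1}(\Omega)$ under the inner product from~\eqref{eq:P_def}, the spectrum of $P$ is pure point and $\{\lambda_{m,n}^2\}$ is dense in $[0,1]$. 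To match eigenvalues with rational rotation number, I would unfold: reflecting $\Omega$ across its four sides and quotienting produces a flat torus $\mathbb T := \R^2/((2a\mathbb Z)\times(2b\mathbb Z))$ on which the chess billiard at characteristic slope $s := \sqrt{(1-\lambda^2)/\lambda^2}$ lifts to the constant linear flow of slope $s$, with $\partial\Omega$ mapping to the coordinate grid. This flow has periodic orbits (equivalently, the return map has rational rotation number) iff $s\cdot(a/b)\in\mathbb Q$; writing $s\cdot(a/b) = m/n$ and squaring is exactly the identity $\lambda^2 = \lambda_{m,n}^2$.

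For the ellipse $\Omega_E = \{(x_1/a)^2 + (x_2/b)^2 < 1\}$, I would reduce to the unit disk $\mathbb D$ via the linear change of variables $(y_1,y_2)=(x_1/a, x_2/b)$. A direct computation transforms $Pu = \mu^2 u$ on $\Omega_E$ into the eigenvalue equation $P_{\mathbb D} v = \tilde\mu^2 v$ on $\mathbb D$, where $P_{\mathbb D} = \partial_{y_2}^2\Delta_{\mathbb D}^{-1}$ and
\[
\frac{\tilde\mu^2}{1-\tilde\mu^2} \;=\; \frac{b^2}{a^2}\cdot\frac{\mu^2}{1-\mu^2},
\]
defining an increasing bijection $\varphi:(0,1)\to(0,1),\ \mu^2\mapsto\tilde\mu^2$. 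The same change of variables sends characteristics to characteristics and restricts to a boundary diffeomorphism conjugating the two chess billiards, so $\mathbf r_{\Omega_E}(\mu) = \mathbf r_{\mathbb D}(\tilde\mu)$. Combining this with the explicit basis~\eqref{basis} of $H^{-1}(\mathbb D)$ from~\S\ref{sec:disk}, whose eigenvalues exhaust the set $\{\tilde\mu^2 : \mathbf r_{\mathbb D}(\tilde\mu)\in\mathbb Q\}$, and transporting back through $\varphi$, yields the statement for $\Omega_E$.

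The main obstacle in both cases is the precise rotation-number identification. For the rectangle, although the unfolding to $\mathbb T$ is classical, one must verify carefully that the return map of the torus linear flow to the image of $\partial\Omega$ is orientation-preservingly conjugate to $b_\lambda$ on the single circle $\partial\Omega$, so that the rotation numbers match up; this amounts to tracking how the unfolding reflections identify the four boundary arcs and their grid images. For the ellipse, the deep input is isolated into the disk computation of~\S\ref{sec:disk}; granted that, the remaining work---checking that the linear change of variables induces the stated equivalence of spectral problems and conjugates the chess billiards---is an elementary but worth-writing-down verification.
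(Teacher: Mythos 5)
Your rectangle argument and your reduction of the axis-aligned ellipse to the disk are both correct and essentially follow the paper's route: diagonalize $P$ explicitly on the model domains (product sine basis for the rectangle, the Chebyshev basis \eqref{basis} for the disk), then transport by a linear change of variables and match eigenvalues against rationality of $\mathbf r(\lambda)$. Your torus-unfolding identification of the rotation number for the rectangle is a slightly more self-contained version of the paper's appeal to the explicit formula $\mathbf r(\lambda)=\lambda/(\sqrt{1-\lambda^2}+\lambda)$ for the square, and the arithmetic ($s\cdot(a/b)\in\mathbb Q$ iff $\lambda^2=\lambda^2_{m,n}$) checks out.

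There is, however, one genuine gap: you only treat ellipses of the form $\{(x_1/a)^2+(x_2/b)^2<1\}$, i.e.\ axis-aligned ones, whereas the theorem is stated for arbitrary ellipses, and the paper explicitly emphasizes (right after the statement) that tilted ellipses are included --- this is exactly the point of contrast with tilted rectangles, for which the conclusion fails. For a general ellipse $\Omega=A\mathbb D+v$ the diagonal substitution $(x_1/a,x_2/b)$ is not available: $A^{-1}$ maps $\Omega$ to $\mathbb D$ but turns the principal part of $P(\mu)$ into a general signature-$(1,1)$ quadratic form in $\partial_{y_1},\partial_{y_2}$ with a nonzero cross term, not one of the normal form $(1-\tilde\mu^2)\partial_{y_2}^2-\tilde\mu^2\partial_{y_1}^2$. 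The missing step is to compose with a $\mu$-dependent rotation $R(\mu)$ that rediagonalizes this form; the key structural fact that makes this legitimate is that the disk is invariant under rotations, so the domain is unchanged while the operator is brought to the normal form $c(\mu)P(\sigma(\mu))$ on $\mathbb D$ with $\sigma$ a continuous increasing bijection of $[0,1]$. (It is precisely the absence of such a rotational symmetry for the square that breaks the argument for tilted rectangles.) With that added, your argument matches the paper's proof; without it, you have only proved the ellipse statement in the axis-aligned case.
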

For the theorem above, we emphasize that the ellipses we consider can be any nondegenerate linear transformation of a circle. This includes tilted ellipses with major and minor axis not necessarily parallel to the coordinate axis. On the other hand, the theorem fails for tilted rectangle; it is shown in \cite{Ralston_73} that the spectrum contains an absolutely continuous part.

\subsection{Related works}
Analysis of (\ref{eq:internal_waves}) goes back to \cite{sobolev}. The spectral properties of the operator $P$ defined in (\ref{eq:P_def}) were studied in \cite{Aleksandryan_60} and \cite{Ralston_73}. The study of internal waves has motivated the mathematical analysis of 0-th order self-adjoint pseudodifferential operators. 
Such operators on closed surfaces in the presence of attractors were studied in~\cite{CdV-b, CdV-c} and then in~\cite{Dyatlov_Zworski_19}. The viscosity limits of these 0-th order pseudodifferential operators were recently studied by Galkowski--Zworski \cite{Galkowski_Zworksi_22} and Wang \cite{Wang_22}. Spectral properties of 0-th order pseudodifferential operators on the circle were studied by Zhongkai Tao~\cite{Tao_19} who produced examples of embedded eigenvalues.

 Then for the case of 2D planar domains, \cite{DWZ} showed that when the underlying dynamics has hyperbolic attractors, there can be high concentration of the fluid velocity near these attractors. This phenomenon was predicted in the physics literature by Maas--Lam \cite{maas_lam_95} in 1995, and has since been experimentally observed by Maas et al. \cite{maas_observation_97}, Hazewinkel et al. \cite{Hazewinkel_2010}, and Brouzet \cite{brouzet_16}. We also consider 2D planar domains in our work, but the underlying dynamical assumption is different, thus leading to different conclusions.

\section{Preliminaries}
In this section, we establish the necessary geometric assumptions so that the underlying classical dynamics governing the system can be reduced to the the boundary of~$\Omega$. We then provide an outline of the necessary results from one-dimensional dynamics required to analyze our internal waves model. 

\subsection{Geometric assumptions}\label{sec:geometric_assumptions}
From \eqref{eq:functional_solution}, it is clear that we need to understand the spectral properties of $P$ at $\lambda^2$. We consider the eigenvalue problem
\begin{equation}\label{eq:eig_problem}
    P(\lambda) u = 0 \quad \text{where} \quad P(\lambda) := (P - \lambda^2) \Delta = (1 - \lambda^2) \partial_{x_2}^2 - \lambda^2 \partial_{x_1}^2.
\end{equation}
Clearly, $P(\lambda): H^1_0(\Omega) \to H^{-1}(\Omega)$ is invertible if and only if $\lambda^2$ is not in the spectrum of the operator $P$ defined in \eqref{eq:evolution_problem}. The problem has no nontrivial solutions if $\lambda > 1$, because then the operator $P(\gl)$ is elliptic and the result follows from the maximum principle. There is also no solution for $\gl =0$ or $\gl = 1$ by direct inspection. It is also known that the spectrum of $P$ is precisely $[0, 1]$, see \cite{Ralston_73}.

The advantage of working with $P(\lambda)$ is that it is simply a $(1 + 1)$-dimensional wave operator, and the symbol is given by the quadratic form $-(1 - \lambda^2) \xi_2^2 + \lambda^2 \xi_1^2$. The relevant classical dynamics here is given by the Hamiltonian flow of the symbol. The dual of this quadratic form can be factorized as 
\begin{equation}\label{eq:dual_factor}
    -\frac{x_1^2}{\lambda^2} + \frac{x_2^2}{1 - \lambda^2} = \ell^+(x, \lambda) \ell^-(x, \lambda), \quad \ell^\pm(x, \lambda) := \pm \frac{x_1}{\lambda} + \frac{x_2}{\sqrt{1 - \lambda^2}}.
\end{equation}
In particular, the integral curves of the Hamiltonian flow of the symbol projected onto~$\Omega$ are precisely the level sets of $\ell^\pm(x, \lambda)$.

\begin{definition}\label{lambda_simple_def}
    Let $\lambda \in (0, 1)$. Then $\Omega$ is called \textup{$\lambda$-simple} if each $\ell^\pm(\bullet, \lambda): \partial \Omega \to \R$ has exactly two distinct critical points which are both non-degenerate.
    See Figure~\ref{fig:defs}.
\end{definition}
\begin{figure}
    \centering
    \includegraphics[scale = .25]{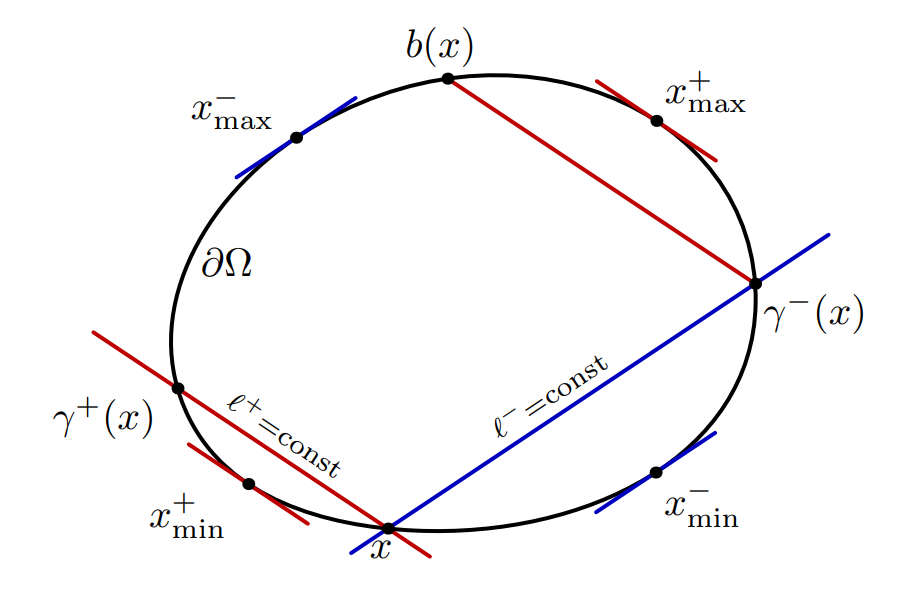}
    \caption{Definition of the involutions $\gamma^\pm$ as well as the chess billiard map $b(x)$. Also illustrates the definition of $\lambda$-simplicity, since $\ell^\pm$ is nondegenerate at the critical points $x_{\mathrm{min}}^\pm$ and $x_{\mathrm{max}}^\pm$. The diagram is from \cite{DWZ}, which considered the same dynamical system.}
    \label{fig:defs}
\end{figure}

Under the $\lambda$-simple condition, the dynamics can be reduced to the boundary. In particular, there exist unique smooth orientation-reversing involutions $\gamma^\pm(\bullet, \lambda): \partial \Omega \to \partial \Omega$ that satisfy
\begin{equation}\label{eq:gamma_def}
    \ell^\pm(x) = \ell^\pm(\gamma^\pm(x)).
\end{equation}
Composing $\gamma^+$ and $\gamma^-$, we obtain an orientation preserving diffeomorphism of the boundary $\partial \Omega$. This is known as the \textit{chess billiard map} $b(\bullet, \lambda): \partial \Omega \to \partial \Omega$, defined by
\begin{equation}\label{eq:b_def}
    b:= \gamma^+ \circ \gamma^-.
\end{equation}
See Figure \ref{fig:defs}. We will often suppress the dependence on $\lambda$ in the notation when there is no ambiguity. 

\subsection{One-dimensional dynamics}
Let $\mathbf x: \mathbb S^1 = \R/\Z \to \partial \Omega$ be a positively oriented parametrization on~$\partial \Omega$.
This gives rise to a covering map $\tilde \pi: \R \to \partial \Omega$ given by $\tilde \pi(x) = \mathbf x(x \bmod 1)$. Let $\mathbf b(\bullet, \lambda) : \R \to \R$ be a lift of $b(\bullet, \lambda)$, i.e. $\mathbf b(\bullet, \lambda)$ satisfies
\[\pi(\mathbf b(x, \lambda)) = b(\pi(x), \lambda)\]
for all $x \in \R$. Fix an initial point $x_0 \in \R$. The \textit{rotation number} of $b( \bullet, \lambda)$ is then defined as
\begin{equation}\label{eq:rot_num_def}
    \mathbf r(\lambda) := \lim_{k \to \infty} \frac{\mathbf b^k(x_0, \lambda) - x_0}{k} \in \R/\Z.
\end{equation}
The rotation number is simply the averaged rotation along an orbit, and it is a well-defined quantity:
\begin{lemma}\label{lem:rot_num}
    The rotation number $\mathbf r(\lambda)$ as defined by the limit (\ref{eq:rot_num_def}) exists. Furthermore, it is independent of the choice of initial point $x_0$, the parametrization $\mathbf x$, and the lift $\mathbf b$. 
\end{lemma}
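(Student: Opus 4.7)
This is Poincaré's classical rotation number theorem, and my plan is to follow the standard argument. Since $b(\bullet,\gl)$ is an orientation-preserving diffeomorphism of $\partial \Omega \cong \R/\Z$, any lift $\mathbf{b}$ is a strictly increasing homeomorphism of $\R$ satisfying $\mathbf{b}(x+1) = \mathbf{b}(x) + 1$. Set
\[\phi_n(x) := \mathbf{b}^n(x) - x, \quad M_n := \sup_\R \phi_n, \quad m_n := \inf_\R \phi_n.\]
A quick induction gives $\mathbf{b}^n(x+1) = \mathbf{b}^n(x) + 1$, so $\phi_n$ is continuous and 1-periodic, and in particular $M_n, m_n \in \R$. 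For $x \le y < x+1$, monotonicity yields $\mathbf{b}^n(x) \le \mathbf{b}^n(y) \le \mathbf{b}^n(x)+1$; combined with 1-periodicity this gives the oscillation bound $M_n - m_n \le 1$.

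Next, writing $\phi_{n+m}(x) = \phi_n(\mathbf{b}^m(x)) + \phi_m(x)$ gives the subadditivity $M_{n+m} \le M_n + M_m$, so by Fekete's lemma $M_n/n$ converges to $\inf_n M_n/n \in \R$. The parallel superadditivity $m_{n+m} \ge m_n + m_m$ gives convergence of $m_n/n$, and the oscillation bound forces the two limits to coincide; call the common value $\mathbf{r}(\gl)$. For any $x_0 \in \R$ the sandwich $m_n \le \phi_n(x_0) \le M_n$ then gives $\phi_n(x_0)/n \to \mathbf{r}(\gl)$, proving convergence of the defining limit and its independence of $x_0$.

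For the remaining independence statements: two lifts of $b$ differ by an integer $k$, and from $\mathbf{b}(x+1) = \mathbf{b}(x) + 1$ induction gives $(\mathbf{b}+k)^n = \mathbf{b}^n + nk$, so the two real limits differ by $k$ and hence agree in $\R/\Z$. For the parametrization, a second positively oriented choice has the form $\mathbf{x}' = \mathbf{x} \circ h$ for an orientation-preserving homeomorphism $h$ of $\mathbb{S}^1$, which lifts to a strictly increasing $\tilde h : \R \to \R$ with $\tilde h(x+1) = \tilde h(x)+1$. The associated lift becomes $\mathbf{b}' = \tilde h^{-1} \mathbf{b} \tilde h$, and since $\tilde h - \mathrm{id}$ is 1-periodic and thus bounded, $(\mathbf{b}')^n(x_0) - x_0$ and $\mathbf{b}^n(\tilde h(x_0)) - \tilde h(x_0)$ differ by an $O(1)$ quantity; dividing by $n$ yields the same limit.

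No step is genuinely difficult; this is classical Poincaré theory. The only mildly delicate bookkeeping is in the final step, where one must correctly track the integer ambiguity of the lift under conjugation by $\tilde h$ in order to conclude equality in $\R/\Z$ rather than merely in $\R$.
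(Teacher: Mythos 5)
Your proof is correct and is the classical Poincar\'e argument (subadditivity of $\sup(\mathbf b^n - \id)$ plus the oscillation bound $M_n - m_n \le 1$); the paper itself gives no proof and simply defers to the reference \cite{Walsh}, which presents essentially this same standard argument. Nothing further is needed.
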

See e.g.~\cite{Walsh} for a proof of Lemma \ref{lem:rot_num}. We refer the reader to \cite[Chapter~1]{Melo_Strien} for a thorough discussion of circle homeomorphisms.

We wish to understand to what extent a circle diffeomorphism $b$ can be conjugated to a rotation. This is clearly impossible for rational rotations, and it was proved by Denjoy in \cite{denjoy-or} that when the rotation number is irrational, the circle diffeomorphism is topologically conjugate to a rotation. We will need more regularity than just topologically conjugate, which
is known when the rotation number satisfies the following Diophantine condition:
\begin{definition}\label{def:diophantine}
    A number $\alpha \in \R$ is \textup{Diophantine} if there exists constants $c, \, \beta > 0$ so that 
    \begin{equation*}
        \left|\alpha - \frac{p}{q} \right| \ge \frac{c}{q^{2 + \beta}}
    \end{equation*}
    for all $p \in \Z$ and $q \in \N$. 
\end{definition}

Roughly speaking, this is a ``sufficiently irrational" condition; $\alpha$ is Diophantine if it is sufficiently far away from all rational numbers. Diophantine numbers exist and form a full measure set. Indeed, take some $\beta > 0$ and a small $c > 0$, and consider the set of numbers in (0,1) that are Diophantine with respect to $\beta$ and $c$:
\[E = \left \{\alpha \in (0, 1) : \left|\alpha - \frac{p}{q} \right| \ge \frac{c}{q^{2 + \beta}} \, \text{ for all $p \in \Z$ and $q \in \N$} \right\}.\]
We can think of $E$ as the set remaining after stripping away small neighborhoods of rational numbers, so the measure of $E$ is at least 
\[|E| \ge 1 - \sum_{q = 1}^\infty 2q \cdot \frac{c}{q^{2 + \beta}}.\]
The right hand side becomes arbitrarily close to $1$ as $c \to 0$.

If the rotation number is irrational, then the topological conjugation can be upgraded to smooth conjugation to a rotation. This upgrade is due to Herman \cite{Herman_79} and Yoccoz \cite{yaccoz}. We collect the conjugation results from \cite{denjoy-or, Herman_79, yaccoz} in the following proposition. 

\begin{proposition}\label{prop:conjugation}
    Let $b: \mathbb S^1 \to \mathbb S^1$ be a smooth circle diffeomorphism with rotation number $\alpha$. 
    \begin{enumerate}[label = (\alph*)]
        \item if $\alpha$ is irrational, then there exists a homeomorphism $\psi: \mathbb S^1 \to \mathbb S^1$ such that $(\psi \circ b \circ \psi^{-1})(\theta) = \theta + \alpha$.

        \item if $\alpha$ is Diophantine, then there exists a smooth diffeomorphism $\psi: \mathbb S^1 \to \mathbb S^1$ such that $(\psi \circ b \circ \psi^{-1})(\theta) = \theta + \alpha$.
    \end{enumerate}
\end{proposition}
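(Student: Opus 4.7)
Both parts of the proposition are classical results from one-dimensional dynamics and the natural thing to do is to cite the original references, but here is the plan of attack one would follow in each case.

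For part (a), the plan is to replay Denjoy's original argument. Since the rotation number $\alpha$ is irrational, $b$ has no periodic orbits and every orbit is infinite. The first goal is to exclude wandering intervals. The key tool is Denjoy's inequality, which leverages the $C^2$ regularity of $b$ (bounded variation of $\log |Db|$) to control the distortion of iterates: if $I$ were a wandering interval, then the iterates $b^k(I)$ would be pairwise disjoint with total length at most $1$, and applying Denjoy's inequality along a subsequence of return times read off from the continued-fraction expansion of $\alpha$ produces a contradiction. Once wandering intervals are ruled out, a standard order-theoretic argument shows every orbit is dense in $\mathbb S^1$. Since the cyclic order of any orbit of $b$ matches that of an orbit of the rigid rotation $R_\alpha$, one defines $\psi$ on the orbit of a chosen point $x_0$ by $\psi(b^n(x_0)) = n \alpha \bmod 1$ and extends by monotonicity and continuity to a homeomorphism of $\mathbb S^1$ conjugating $b$ to $R_\alpha$.

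For part (b), the plan is to invoke the Herman--Yoccoz theorem; the heart of the matter is the small-divisor problem arising from the linearized conjugacy equation. Linearizing $\psi \circ b = R_\alpha \circ \psi$ about an approximate solution reduces to the cohomological equation
\[
    h(\theta + \alpha) - h(\theta) = g(\theta), \qquad \int_{\mathbb S^1} g(\theta)\, d\theta = 0,
\]
whose Fourier coefficients satisfy $\hat h(n) = \hat g(n)/(e^{2\pi i n \alpha} - 1)$. The Diophantine assumption forces $|e^{2\pi i n \alpha} - 1| \ge c' |n|^{-1-\beta}$, which yields a solution $h$ with a controlled loss of derivatives. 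One then runs a KAM-type Newton iteration in a scale of smooth function spaces, where the super-linear convergence absorbs the fixed derivative loss at each stage. Yoccoz's alternative proof avoids the KAM machinery by running a renormalization scheme on the continued-fraction return maps of $b$ and bootstrapping smoothness from the topological conjugacy already produced by part (a).

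The main obstacle is clearly the small-divisor control in part (b); it is precisely what makes the Diophantine hypothesis unavoidable, since Arnold constructed analytic circle diffeomorphisms with Liouville rotation number whose Denjoy conjugacy is not even absolutely continuous. Since these are classical theorems and not contributions of the present paper, the natural thing in the writeup is to present the proposition as a summary of Denjoy \cite{denjoy-or}, Herman \cite{Herman_79}, and Yoccoz \cite{yaccoz}, and to direct the reader to those references for the full proofs.
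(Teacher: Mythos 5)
Your proposal matches the paper exactly: the paper offers no proof of this proposition, simply collecting the classical results of Denjoy \cite{denjoy-or}, Herman \cite{Herman_79}, and Yoccoz \cite{yaccoz} and citing those references, which is precisely what you recommend. Your accompanying sketches of the Denjoy distortion argument and the Herman--Yoccoz small-divisor analysis are accurate and appropriately note that the smoothness hypothesis on $b$ supplies the regularity (bounded variation of $\log|Db|$) that Denjoy's theorem requires.
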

A standard consequence of the topological conjugacy result is the unique ergodicity of circle diffeomorphisms with irrational rotation number.
\begin{corollary}\label{cor:ergodic}
    Let $b: \mathbb S^1 \to \mathbb S^1$ be a smooth circle diffeomorphism with irrational rotation number. Then for every $f \in C^0(\mathbb S^1)$, there exists a constant function $c(f)$ such that
    \[\frac{1}{N} \sum_{n = 0}^{N - 1} (b^*)^n f \to c(f) \quad \text{in $C^0(\mathbb S^1)$ as $N \to \infty$}.\]
    Consequently, if $b^* f = f$, then $f$ must be constant. 
\end{corollary}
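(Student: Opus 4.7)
The plan is to reduce the statement to the well-known unique ergodicity of irrational rotations via the topological conjugacy in Proposition~\ref{prop:conjugation}(a). Let $\psi: \mathbb S^1 \to \mathbb S^1$ be the homeomorphism such that $R_\alpha := \psi \circ b \circ \psi^{-1}$ is the rotation by $\alpha = \mathbf r(\lambda)$. For any $f \in C^0(\mathbb S^1)$, set $\tilde f := f \circ \psi^{-1} \in C^0(\mathbb S^1)$. Then
\[
\frac{1}{N} \sum_{n=0}^{N-1} (b^*)^n f \;=\; \Bigl(\frac{1}{N} \sum_{n=0}^{N-1} (R_\alpha^*)^n \tilde f\Bigr) \circ \psi,
\]
so uniform convergence of the Birkhoff averages of $\tilde f$ under $R_\alpha$ transfers to uniform convergence of the Birkhoff averages of $f$ under $b$, since $\psi$ is a homeomorphism and the $C^0$ norm is invariant under pullback by homeomorphisms.

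Thus it suffices to prove the statement for the irrational rotation $R_\alpha$ on $\mathbb S^1$ with constant
\[
c(\tilde f) \;=\; \int_{\mathbb S^1} \tilde f(\theta)\,d\theta.
\]
For this, I would first verify it on trigonometric monomials $e_k(\theta) = e^{2\pi i k \theta}$: for $k = 0$ each Birkhoff average equals $1 = c(e_0)$; for $k \ne 0$, irrationality of $\alpha$ implies $e^{2\pi i k \alpha} \ne 1$, and the geometric sum gives
\[
\Bigl|\frac{1}{N} \sum_{n=0}^{N-1} e^{2\pi i k (\theta + n\alpha)}\Bigr| \;\le\; \frac{2}{N\,|1 - e^{2\pi i k \alpha}|} \;\longrightarrow\; 0
\]
uniformly in $\theta$, which matches $c(e_k) = 0$. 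By linearity, uniform convergence holds for all trigonometric polynomials $p$.

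To extend to arbitrary $\tilde f \in C^0(\mathbb S^1)$, given $\varepsilon > 0$ I would pick, by the Stone--Weierstrass theorem, a trigonometric polynomial $p$ with $\|\tilde f - p\|_{C^0} < \varepsilon/3$. Since each $R_\alpha^*$ is an isometry of $C^0$, the triangle inequality yields
\[
\Bigl\|\frac{1}{N}\sum_{n=0}^{N-1}(R_\alpha^*)^n \tilde f - c(\tilde f)\Bigr\|_{C^0} \le \frac{2\varepsilon}{3} + \Bigl\|\frac{1}{N}\sum_{n=0}^{N-1}(R_\alpha^*)^n p - c(p)\Bigr\|_{C^0} + |c(p) - c(\tilde f)|,
\]
and taking $N$ large the middle term is $<\varepsilon/3$ by the polynomial case, while $|c(p)-c(\tilde f)| \le \|\tilde f-p\|_{C^0} < \varepsilon/3$. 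Transporting back by $\psi$ gives the claim, with $c(f) := c(\tilde f) = \int f \circ \psi^{-1}\,d\theta$. For the final statement, if $b^* f = f$ then every term of the Birkhoff average equals $f$, so taking the limit forces $f \equiv c(f)$. The only mild obstacle is being careful that conjugacy is merely topological (not smooth) in case~(a), but since we only need $C^0$ convergence this causes no issue, as $\psi$ and $\psi^{-1}$ are continuous.
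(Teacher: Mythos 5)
Your proposal is correct and follows essentially the same route as the paper: reduce to an irrational rotation via the topological conjugacy of Proposition~\ref{prop:conjugation}(a), verify the uniform convergence of Birkhoff averages on trigonometric monomials by the geometric sum, and conclude by density. You merely spell out the $\varepsilon/3$ approximation step and the transfer through $\psi$ in more detail than the paper does.
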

\begin{proof}
    By Proposition \ref{prop:conjugation}, it suffices to prove the theorem in the case that $b(\theta) = \theta + \alpha$ for some irrational $\alpha \in (0, 1)$. Furthermore, it suffices to prove the statement for a dense subset of $C^0(\mathbb S^1)$, so we simply check the statement for trigonometric polynomials. Indeed, it is trivial for constant functions, and for $k \in \Z \setminus \{0\}$, 
    \[\frac{1}{N} \sum_{n = 0}^{N - 1} e^{2 \pi ik(\theta + n \alpha)} = \frac{1}{N} e^{2 \pi ik \theta} \frac{1 - e^{2 \pi i k \alpha N}}{1 - e^{2 \pi i k \alpha}} \to 0\]
    uniformly in $\theta$ since $1 - e^{2 \pi ik \alpha} \neq 0$ by the irrationality of $\alpha$. This proves the statement for all trigonometric polynomials as desired. 
\end{proof}


Eventually, we will need to solve cohomological equations of the form
\begin{equation}\label{eq:basic_cohomological}
    v(\theta)-v(\theta + \alpha)=g(\theta)
\end{equation}
given $g \in \CIc(\mathbb S^1)$ with vanishing integral. If $\alpha$ satisfies the Diophantine condition, then we can have explicit high frequency control of $v$. In particular, if $g$ is smooth, then we have smooth solutions. 

\begin{lemma}\label{lem:cohom}
    Let $g \in C^\infty (\mathbb S^1)$ with $\int_{\mathbb S^1} g = 0$, and assume that $\alpha$ is Diophantine. Then there exists $v \in C^\infty(\mathbb S^1)$ that solves \eqref{eq:basic_cohomological} uniquely modulo constant functions.
\end{lemma}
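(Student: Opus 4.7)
The plan is to expand both sides in Fourier series and solve coefficient-by-coefficient, with the Diophantine condition used precisely to control the small denominators that arise.

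First I would write $g(\theta) = \sum_{k \in \Z} \hat g(k) e^{2\pi i k\theta}$, where the hypothesis $\int_{\mathbb S^1} g = 0$ forces $\hat g(0) = 0$, and seek $v(\theta) = \sum_{k} \hat v(k) e^{2\pi i k\theta}$. Plugging into \eqref{eq:basic_cohomological} gives the formal identity
\[
\hat v(k)\bigl(1 - e^{2\pi i k \alpha}\bigr) = \hat g(k),\qquad k \in \Z.
\]
For $k = 0$ this is $0 = 0$, so $\hat v(0)$ is free (this is the source of the constant-function ambiguity). For $k \neq 0$ we are forced to take $\hat v(k) = \hat g(k)/(1 - e^{2\pi i k\alpha})$, which also gives the uniqueness modulo constants at the level of formal series.

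The main thing to check is that this formal series is actually smooth, and this is where the Diophantine condition enters. Writing $\|k\alpha\|$ for the distance from $k\alpha$ to the nearest integer, one has $|1 - e^{2\pi i k \alpha}| \ge 4\|k\alpha\|$. The Diophantine condition of Definition \ref{def:diophantine} is equivalent (after multiplying through by $q$ and taking $p$ the nearest integer to $k\alpha$) to the existence of $c', \beta > 0$ such that $\|k\alpha\| \ge c' |k|^{-(1+\beta)}$ for all $k \in \Z \setminus \{0\}$. Therefore
\[
|\hat v(k)| \le \frac{|\hat g(k)|}{4\|k\alpha\|} \le C|k|^{1+\beta}|\hat g(k)|.
\]

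Since $g \in C^\infty(\mathbb S^1)$, its Fourier coefficients satisfy $|\hat g(k)| = O(|k|^{-M})$ for every $M$, so $|\hat v(k)| = O(|k|^{-M+1+\beta}) = O(|k|^{-M'})$ for every $M'$. Thus the series $\sum \hat v(k) e^{2\pi i k\theta}$ converges absolutely in every $C^r$ norm and defines a smooth function $v$ solving \eqref{eq:basic_cohomological}. For uniqueness modulo constants in $C^\infty$ (rather than just among formal series), I would observe that if $v_1, v_2$ are two smooth solutions, then $w := v_1 - v_2$ satisfies $w(\theta) = w(\theta + \alpha)$, i.e.\ $b^* w = w$ for the rotation $b(\theta) = \theta + \alpha$; since $\alpha$ is irrational, Corollary \ref{cor:ergodic} applied to this rotation forces $w$ to be constant. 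The only potential obstacle is verifying the Diophantine-to-small-denominator estimate $\|k\alpha\| \gtrsim |k|^{-(1+\beta)}$, but this is a direct rewriting of Definition \ref{def:diophantine}, so no real difficulty arises.
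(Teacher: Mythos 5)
Your proposal is correct and follows essentially the same route as the paper's proof: expand in Fourier series, invert the multiplier $1 - e^{2\pi i k\alpha}$, and use the Diophantine condition to bound the small denominators by $C|k|^{1+\beta}$, which costs only finitely many derivatives and hence preserves smoothness. The extra details you supply (the bound $|1-e^{2\pi i k\alpha}| \ge 4\|k\alpha\|$ and the ergodicity argument for uniqueness) are fine and consistent with what the paper leaves implicit.
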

\begin{proof}
    Let $c, \beta > 0$ be the Diophantine constants associated with $\alpha$ in Definition \ref{def:diophantine}. Taking the Fourier series of both sides of (\ref{eq:basic_cohomological}), 
    \[\hat g(k) = (1 - e^{2 \pi i k \alpha}) \hat v(k).\]
    For all $k \neq 0$,  
    \[\left|\frac{1}{1 - e^{2 \pi i k \alpha}} \right| \le c^{-1} |k|^{1 + \beta}.\]
    Since $\hat g(0) = 0$, $v$ can be solved uniquely up to choice of $\hat v(0)$, and we have the estimate
    \[\|v\|_{H^s} \le C\|g\|_{H^{s + \beta + 1}}\]
    for every $s > 0$. Therefore $v$ is smooth and unique modulo constant functions. 
\end{proof}

\section{Internal waves in an ergodic setting}

In this section we prove Theorem \ref{thm:ergodic}. The strategy is to construct a right inverse to $P(\lambda)$ on smooth functions for $\lambda$ such that $\mathbf r(\lambda)$ is Diophantine (Definition \ref{def:diophantine}). Define coordinates
\begin{equation}\label{eq:y_coord}
    y_\pm = \frac{1}{2} \ell^\pm (x, \lambda)
\end{equation}
where $\ell^\pm$ are defined in \eqref{eq:dual_factor}. In these coordinates, we have
\[P(\lambda) = \frac{\partial^2}{\partial y_+ \partial y_-}.\]
In this form, it is clear that solutions on the interior of $\Omega$ to the eigenvalue problem \eqref{eq:eig_problem} can be found by integration up to boundary conditions, which effectively reduces the problem to the boundary. 

\subsection{Injectivity of the eigenvalue problem}
We first show that solutions to the eigenvalue problem take a very convenient form. 
\begin{lemma}\label{lem:decomp}
    Let $\Omega$ be $\lambda$-simple and suppose $u \in H^1_0(\Omega)$ is a solution to the eigenvalue problem \eqref{eq:eig_problem}. Then using the coordinates \eqref{eq:y_coord}, $u$ can be decomposed as
    \[u(y_+, y_-) = u_+(y_+) + u_-(y_-)\]
    where $u_\pm \in H^1(\Omega)$ are functions on $\Omega$ that depend only on $y_\pm$ respectively. In fact, $u_\pm \in C^0(\overline \Omega)$. 
\end{lemma}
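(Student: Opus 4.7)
The plan is to change to the coordinates $(y_+,y_-)$ of \eqref{eq:y_coord}, in which the eigenvalue equation becomes $\partial_{y_+}\partial_{y_-} u = 0$, and then to run a classical d'Alembert-style decomposition, being careful about the low regularity of $u$ and the geometry forced by $\lambda$-simplicity.

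First I would record that $x\mapsto(y_+(x),y_-(x))$ is an affine diffeomorphism of $\R^2$, since $\nabla\ell^+$ and $\nabla\ell^-$ are linearly independent for $\lambda\in(0,1)$; let $\widetilde\Omega$ denote the image of $\Omega$. Then $u\in H^1_0(\widetilde\Omega)$ with $\partial_{y_+}\partial_{y_-} u = 0$ in $\mathcal D'(\widetilde\Omega)$, and $\widetilde\Omega$ is bounded, open, and simply-connected. The $\lambda$-simple hypothesis says $\ell^\pm|_{\partial\Omega}$ has exactly one non-degenerate minimum and one non-degenerate maximum, so the projection of $\overline\Omega$ onto the $y_\pm$-axis is a closed interval $I_\pm=[a_\pm,b_\pm]$ and each level line $\{y_\pm = c\}$ with $c\in(a_\pm,b_\pm)$ meets $\partial\Omega$ in exactly two points. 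Since $\ell^\pm$ is linear and $\Omega$ is simply-connected, the corresponding slice $\widetilde\Omega\cap\{y_\pm=c\}$ is a single open interval.

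Next I would set $v:=\partial_{y_-} u\in L^2(\widetilde\Omega)$, so that $\partial_{y_+} v = 0$ distributionally. Mollifying $v$ in the $y_+$ variable against a compactly supported bump (or arguing directly by duality against test functions of product type with a mean-zero $y_+$-factor) shows that on each slice $\widetilde\Omega\cap\{y_-=c\}$, which is a single interval by the previous paragraph, $v(\cdot,c)$ is a.e.\ constant; by Fubini that constant defines a function $w\in L^2(I_-)$. Setting $u_-(y_-):=\int_{a_-}^{y_-} w(s)\,ds\in H^1(I_-)$, the difference $u-u_-(y_-)$ has vanishing $y_-$-derivative on $\widetilde\Omega$, and the same slice argument with the roles of $y_+$ and $y_-$ swapped produces $u(y_+,y_-)-u_-(y_-) = u_+(y_+)$ for some $u_+\in H^1(I_+)$.

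Finally, the one-dimensional Sobolev embedding $H^1(I_\pm)\hookrightarrow C^0(I_\pm)$ combined with continuity of the affine map $x\mapsto y_\pm(x)$ on $\overline\Omega$ yields $u_\pm\in C^0(\overline\Omega)$, and the chain rule (with $u_\pm'\in L^2(I_\pm)$) gives $u_\pm(y_\pm(\cdot))\in H^1(\Omega)$. The main subtlety I anticipate is the distributional/measurability step: one must select the slice-constants $w(y_-)$ coherently so that the resulting $w$ is measurable and square-integrable. The mollification route handles this automatically by producing a smooth family of approximants already depending only on $y_-$; the simple-connectedness of $\widetilde\Omega$ together with the interval-slice structure prevents the kind of multi-valuedness on disconnected slices that would otherwise obstruct the decomposition.
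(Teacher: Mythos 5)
Your overall route is the same as the paper's: pass to the characteristic coordinates $(y_+,y_-)$, differentiate to get $\partial_{y_+}\partial_{y_-}u=0$, show each derivative depends on one variable only, and integrate. The slicing/mollification argument for upgrading the distributional identity $\partial_{y_+}v=0$ to ``$v$ is a.e.\ constant on slices'' is fine, and the observation that simple-connectedness plus linearity of $\ell^\pm$ makes each slice a single interval is the right structural input. However, there is a genuine gap at the regularity step. You claim that $v=\partial_{y_-}u\in L^2(\widetilde\Omega)$ forces the slice-constant function $w$ to lie in $L^2(I_-)$. This does not follow: by Fubini, $\|v\|_{L^2(\widetilde\Omega)}^2=\int_{I_-}|w(c)|^2\,m(c)\,dc$ where $m(c)$ is the length of the slice $\{y_-=c\}\cap\widetilde\Omega$, and $m(c)\to 0$ at the endpoints $a_-,b_-$ (which are the critical values of $\ell^-|_{\partial\Omega}$). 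So you only get $w$ in a \emph{weighted} $L^2$ space, and e.g.\ $w(c)\sim (c-a_-)^{-1/2}$ is compatible with $v\in L^2$ but not with $w\in L^2(I_-)$. Consequently your appeal to the embedding $H^1(I_\pm)\hookrightarrow C^0(I_\pm)$ is unjustified as written.

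This is exactly where the non-degeneracy in the $\lambda$-simplicity hypothesis must be used quantitatively, and is how the paper proceeds: non-degenerate critical points give $m(c)\asymp\sqrt{(c-a_-)(b_--c)}$, so $w$ lies in $L^2$ with that weight; since $\bigl((c-a_-)(b_--c)\bigr)^{-1/4}\in L^2(I_-)$, Cauchy--Schwarz yields $w\in L^1(I_-)$, hence $u_-(y_-)=\int_{a_-}^{y_-}w$ is absolutely continuous and in particular $C^0$ — but not, in general, $H^1$ of the interval. The conclusion $u_\pm\circ\ell^\pm\in H^1(\Omega)$ then holds for a different reason than you give: it is the weighted bound itself (i.e.\ $v\in L^2(\widetilde\Omega)$), not membership of $u_\pm'$ in $L^2(I_\pm)$. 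With the $L^2(I_\pm)$ claim replaced by the weighted-$L^2$/Cauchy--Schwarz/$L^1$ chain, your argument closes and coincides with the paper's.
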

\begin{proof}
    Let $u'_+ := \partial_{y_+} u$. Observe that $\partial_{y_-} u'_+ = 0$, so $u'_+$ is a function of $y_+$ only. The eigenvalue problem \eqref{eq:eig_problem} is invariant under shifting of the domain, so we may assume without the loss of generality that the two nondegenerate critical points of $\ell^+$ are $y_+ = 0$ and $y_+ = a$. Note that $u'_+ \in L^2(\Omega)$, so the nondegeneracy of the critical points imply that
        \[\int_0^1 \sqrt{y_+(a - y_+)} |u'_+(y_+)|^2\, dy_+ < \infty.\]
    By Cauchy-Schwarz, we then have
    \[\|u'_+\|_{L^1([0, a])} \le \|(y_+(1 - y_+))^{1/4} u'_+\|_{L^2([0, a])} \|(y_+(1 - y_+))^{-1/4}\|_{L^2([0, a])} < \infty\]
    Put 
    \[\tilde u_+(y_+) := \int_{0}^{y_+} u'_+(s)\, ds\]
    By a slight abuse of notation, we can view $\tilde u_+$ as an element of $C^0(\Omega)$ that depends only on $y_+$. Then we see that $u - \tilde u_+ \in H^1$ is a function that depends only on $y_-$. 

    Running the same argument with $y_-$ instead of $y_+$, we also have $\tilde u_- \in C^0(\Omega)$ depending only on $y_-$ such that $u - \tilde u_-$ depends only on $y_+$. Modifying $\tilde u_\pm$ by constants, there must exist $u_\pm \in C^0(\Omega)$ that depends only on $y_\pm$ such that $u_+ + u_- = 0$. 
\end{proof}

\begin{proof}[Proof of Theorem \ref{thm:ergodic}(a)]
    We prove that the operator $P(\lambda)$ has trivial kernel. Suppose $u$ is a solution the eigenvalue problem \eqref{eq:eig_problem}. Then we have the decomposition $u(y_+, y_-) = u_+(y_+) + u_-(y_-)$ from Lemma \ref{lem:decomp}. Define the boundary traces
    \begin{equation}\label{eq:boundary_reduction}
        U_\pm := u_\pm|_{\partial \Omega} \in C^0(\Omega).
    \end{equation}
    Observe that $U_+ + U_- = 0$ and $U_\pm \circ \gamma_\pm = U_\pm$, so it follows that $U_\pm$ are invariant under pullback by the chess-billiard map:
    \begin{equation}\label{eq:b_invariance}
        U_\pm = b^* U_\pm.
    \end{equation}
    If the rotation number $\rho(\lambda)$ of $b( \bullet, \lambda)$ is irrational, it follows from \eqref{eq:b_invariance} and Corollary \ref{cor:ergodic} that the functions $U_\pm$ must be constant. The solution $u$ can then be recovered from the boundary data $U_\pm$, and we clearly have $u = 0$. 
\end{proof}
   
\subsection{Energy boundedness}
We first construct the right inverse on smooth functions to $P(\lambda)$ when the rotation number defined in \eqref{eq:rot_num_def} is Diophantine.  
\begin{proposition}\label{prop:right_inverse}
    Let $\Omega$ be $\lambda$-simple and assume that the rotation number $\mathbf r(\lambda)$ is Diophantine. Then there exists $R(\lambda): C^\infty(\overline \Omega) \to C^\infty(\overline \Omega) \cap H^1_0(\Omega)$ such that $P(\lambda) R(\lambda) = \Id$. 
\end{proposition}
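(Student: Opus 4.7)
The plan is to write $R(\lambda) f := u_0 - \hat U_+ - \hat U_-$, where $u_0$ is a particular smooth solution of $P(\lambda) u_0 = f$ and $\hat U_\pm \in C^\infty(\overline \Omega)$ depend only on $y_\pm$, chosen so that $(u_0 - \hat U_+ - \hat U_-)|_{\partial \Omega} = 0$. In the coordinates $y_\pm$ from~\eqref{eq:y_coord} one has $P(\lambda) = \partial_{y_+}\partial_{y_-}$, so extending $f$ smoothly to a neighborhood of $\overline \Omega$ in $\R^2$ and double integrating yields such a $u_0$. Setting $g := u_0|_{\partial \Omega}$ and $U_\pm := \hat U_\pm|_{\partial \Omega}$, which are automatically $\gamma^\pm$-invariant, the task reduces to finding $U_\pm \in C^\infty(\partial \Omega)$ with $U_\pm \circ \gamma^\pm = U_\pm$ and $U_+ + U_- = g$. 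Eliminating $U_- = g - U_+$ and imposing $\gamma^-$-invariance, together with $U_+ \circ \gamma^- = U_+ \circ \gamma^+ \circ \gamma^- = U_+ \circ b$ from the $\gamma^+$-invariance, reduces this to the cohomological equation
\[
U_+ \circ b - U_+ = g \circ \gamma^- - g =: H
\]
on $\partial \Omega$, subject to the side constraint $U_+ \circ \gamma^+ = U_+$.

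For solvability, Proposition~\ref{prop:conjugation}(b) and the Diophantine hypothesis yield a smooth conjugacy of $b$ to rotation by $\mathbf r(\lambda)$; pushing forward Lebesgue measure gives the unique $b$-invariant probability measure $\mu_b$ on $\partial \Omega$ supplied by Corollary~\ref{cor:ergodic}. Because $\gamma^- \circ b \circ \gamma^- = b^{-1}$, the pushforward $(\gamma^-)_*\mu_b$ is $b^{-1}$-invariant, hence $b$-invariant, hence equals $\mu_b$ by uniqueness, which forces $\int_{\partial \Omega} H\, d\mu_b = 0$. Transporting the cohomological equation through the conjugacy reduces it to~\eqref{eq:basic_cohomological} on $\mathbb S^1$ with smooth zero-mean right-hand side and Diophantine frequency, so Lemma~\ref{lem:cohom} produces a smooth $U_+$ unique up to additive constants. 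The constraint $U_+ \circ \gamma^+ = U_+$ then holds automatically: from $\gamma^+ \circ b = \gamma^-$ and $(\gamma^-)^2 = \id$ a direct computation gives the identity $H \circ \gamma^+ \circ b = -H$, and precomposing the cohomological equation with $\gamma^+$ while using $b \circ \gamma^+ = \gamma^+ \circ b^{-1}$ shows that $V := U_+ \circ \gamma^+$ satisfies $V \circ b - V = H$ as well. Therefore $U_+ - V$ is $b$-invariant and hence constant by unique ergodicity, and applying $\gamma^+$ to this constant relation forces the constant to vanish.

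Setting $U_- := g - U_+$ now yields the decomposition. To extend each $U_\pm$ from $\partial \Omega$ to a smooth function of $y_\pm$ on $\overline \Omega$, the only subtle issue is regularity at the two nondegenerate fixed points of $\gamma^\pm$, which are the critical points of $\ell^\pm|_{\partial \Omega}$. Linearizing $\gamma^\pm$ smoothly to $s \mapsto -s$ near such a fixed point (Bochner), the function $y_\pm - y_\pm^{\mathrm{crit}}$ is even in $s$ with a nondegenerate minimum, hence equals $\sigma^2$ for a smooth coordinate $\sigma$; any $\gamma^\pm$-invariant smooth function is even in $\sigma$ and therefore smooth in $y_\pm - y_\pm^{\mathrm{crit}}$ by Whitney. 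Composing with $y_\pm(x)$ yields $\hat U_\pm \in C^\infty(\overline \Omega) \cap \ker P(\lambda)$, and $R(\lambda) f := u_0 - \hat U_+ - \hat U_-$ lies in $C^\infty(\overline \Omega) \cap H^1_0(\Omega)$ and satisfies $P(\lambda) R(\lambda) f = f$. The main obstacle I anticipate is securing both the solvability condition $\int H\, d\mu_b = 0$ and the automatic $\gamma^+$-invariance of $U_+$; both rest on the symmetries $\gamma^+ \circ b \circ \gamma^+ = \gamma^- \circ b \circ \gamma^- = b^{-1}$, the identity $H \circ \gamma^+ \circ b = -H$, and unique ergodicity of $b$.
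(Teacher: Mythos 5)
Your proof is correct and follows the same overall architecture as the paper's: a particular solution $u_0$ by double integration in the characteristic coordinates, reduction to a boundary problem, a cohomological equation solved via the Herman--Yoccoz conjugacy and Lemma \ref{lem:cohom}, and extension into $\overline\Omega$ via evenness at the nondegenerate critical points of $y_\pm|_{\partial\Omega}$. Two organizational choices differ from the paper, both legitimately. First, the paper solves \emph{two} cohomological equations, one for each of $V_\pm$, with the normalization $\int V_\pm = \tfrac12\int U_0$, and then uses uniqueness-up-to-constants to verify both $V_++V_-=U_0$ and $(\gamma^\pm)^*V_\pm=V_\pm$; you instead eliminate $U_-=g-U_+$ at the outset, solve a single equation $U_+\circ b-U_+=g\circ\gamma^--g$, and recover the $\gamma^+$-invariance from the identity $H\circ\gamma^+\circ b=-H$ together with unique ergodicity (your computation $b\circ\gamma^+=\gamma^+\circ b^{-1}$, $V\circ b-V=H$, constant forced to vanish by applying $\gamma^+$ twice, is correct). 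This is a slightly leaner route to the same conclusion. Second, for the solvability condition you argue that the unique $b$-invariant measure $\mu_b$ is $\gamma^-$-invariant because $(\gamma^-)_*\mu_b$ is again $b$-invariant; the paper instead works directly in the conjugating coordinate $\theta$ and shows $\partial_\theta\gamma^\pm\equiv-1$, so that $d\theta$ is $\gamma^\pm$-invariant. These are equivalent facts, and your measure-theoretic phrasing avoids the explicit derivative computation. The extension step at the fixed points of $\gamma^\pm$ (linearize the involution, write $y_\pm-y_\pm^{\mathrm{crit}}=\sigma^2$, invoke Whitney's theorem on even functions) matches the paper's argument in substance.
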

\begin{proof}
    1. Using the coordinates $y_\pm$ defined in \eqref{eq:y_coord}, we wish to solve
    \begin{equation}\label{eq:u}
        \frac{\partial^2 u}{\partial y_+ \partial y_-} = f, \quad f \in C^\infty(\overline \Omega)
    \end{equation}
    for $u \in H_0^1(\Omega)$. Let $\tilde f \in \CIc(\R^2)$ be an extension of $f$, i.e. $\tilde f|_{\Omega} = f$. Define
    \[u_0(y_+, y_-) := \int_{-\infty}^{y_+} \int_{-\infty}^{y_-} \tilde f(\eta_+, \eta_-) \, d\eta_- d\eta_+, \quad \text{and} \quad U_0 := u_0|_{\partial \Omega} \in C^\infty(\partial \Omega).\]
    Therefore, it suffices to solve
    \begin{equation}\label{eq:v}
        \frac{\partial^2 v}{\partial y_+ \partial y_-} = 0, \quad v|_{\partial \Omega} = U_0
    \end{equation}
    for $v \in C^\infty(\Omega)$ since $u = u_0 - v$ would then be the solution to \eqref{eq:u}. 

    \noindent
    2. By Proposition \ref{prop:conjugation}, we may choose smooth coordinates $\theta$ on the boundary $\partial \Omega$ so that $b(\theta) = \theta + \mathbf r(\lambda)$. We claim that
    \begin{equation}\label{eq:zero_avg}
        \int_{\partial \Omega} U_0(\gamma^\pm(\theta)) - U_0(\theta)\, d \theta = 0.
    \end{equation}

Recall by \eqref{eq:b_def}, we have $b\circ\gamma^+=\gamma^+\circ b^{-1}$. In the coordinate
$\theta$ this gives $\gamma^+(\theta)+ \mathbf r(\lambda) =\gamma^+(\theta - \mathbf r(\lambda))$. Differentiating in~$\theta$, we get
\[\partial_\theta \gamma^+(\theta)=\partial_\theta \gamma^+(\theta- \mathbf r(\lambda)).\] 
Since $\mathbf r(\lambda)$ is irrational and $\gamma^+$ is an orientation reversing involution, we see that
$\partial_\theta\gamma^+(\theta)=-1$. Similarly, $\partial_\theta \gamma^-(\theta) = -1$, hence \eqref{eq:zero_avg} follows.


    \noindent
    3. Therefore, it follows from Lemma \ref{lem:cohom} that there exist unique $V_\pm \in C^\infty(\partial \Omega)$ such that
    \begin{equation}\label{eq:V_cohom}
        (b^{\pm 1})^* V_\pm - V_\pm = (\gamma^\mp)^* U_0 - U_0 \quad \text{and} \quad \int_{\partial \Omega} V_\pm(\theta)\,d\theta = \frac{1}{2} \int_{\partial \Omega} U_0(\theta) \, d\theta. 
    \end{equation}
    Observe that 
    \begin{align*}
        (V_+ + V_-) - (b^{-1})^*(V_+ + V_-) &= (b^{-1})^*((\gamma^-)^* U_0 - U_0) - ((\gamma^+)^* U_0 - U_0) \\
        &= U_0 - (b^{-1})^* U_0.
    \end{align*}
    Solutions to the cohomological equation are unique up to constants, and it follows from~\eqref{eq:V_cohom} that $\int V_+ + V_- = \int U_0$, so we must have
    \begin{equation}\label{eq:bd_sum}
        V_+ + V_- = U_0.
    \end{equation}
    Furthermore, applying $\gamma^\mp$ to both sides of \eqref{eq:V_cohom}, we see that
    \[((\gamma^\pm)^* V_\pm) - (b^{\pm 1})^* ((\gamma^\pm)^* V_{\pm}) =  U_0 - (\gamma^\mp)^* U_0.\]
    Adding this to \eqref{eq:zero_avg},
    \[[(\gamma^\pm)^* V_\pm - V_\pm] - (b^{\pm 1})^* [(\gamma^\pm)^* V_\pm - V_\pm] = 0.\]
    Again using Lemma \ref{lem:cohom}, we find that
    \begin{equation}\label{eq:gamma_invariance}
        (\gamma^\pm)^* V_\pm = V_\pm.
    \end{equation}
    By the $\lambda$-simplicity assumption, the coordinate functions restricted to boundary, $y_\pm|_{\partial\Omega}$, have nondegenerate critical points. In particular, up to a smooth change of coordinates, we may assume that $y_+ = 0$ is a critical point of $y_+|_{\partial \Omega}$, near which the boundary can be parameterized by $y_-$ and is given by $\{y_+ = y_-^2\}$. We may further assume that the $\gamma^+$-invariance of $V_+$ from \eqref{eq:gamma_invariance} in these coordinates reads $V_+(y_-) = V_+(-y_-)$ near the critical point. Therefore $V_+$ is a smooth function of $y_+ = y_-^2$ near the critical point. Similar analysis holds for all other critical points, so, there exists $v_\pm \in C^\infty(\overline \Omega)$ such that $v_\pm$ depends only on $y_\pm$ and $v_\pm|_{\partial \Omega} = V_{\pm}$. Then $v = v_+ + v_-$ solves \eqref{eq:v} and the boundary conditions are satisfied due to~\eqref{eq:bd_sum}.
\end{proof}

The obstruction to energy boundedness in the functional calculus solution \eqref{eq:functional_solution} is the singularity at $z=\lambda^2$ that appears as $t \to \infty$. This can be cancelled out using the right inverse to $P(\lambda)$ constructed in the previous proposition.  

\begin{proof}[Proof of Theorem \ref{thm:ergodic}(b)]
Let $f \in C^\infty(\overline \Omega; \R)$ and let $R(\lambda)$ be as in Proposition \ref{prop:right_inverse}. Then $g := \Delta_\Omega R(\lambda) f$ lies in $C^\infty(\overline \Omega) \subset H^{-1}(\Omega)$. Then the evolution problem \eqref{eq:evolution_problem} can be rewritten as
\begin{equation*}
    (\partial_t^2 + P) w = (P - \lambda^2) g \cos \lambda t, \quad w|_{t = 0} = \partial_t w|_{t = 0} = 0, \quad u = \Delta_\Omega^{-1} w. 
\end{equation*}
Using the functional calculus solution formula for $w(t)$ given in \eqref{eq:functional_solution}, we have
\begin{equation}\label{eq:functional_solution_var}
    w(t) = (\cos (t \sqrt{P}) - \cos(t \lambda))g.
\end{equation}
Since $\left|\cos (t \sqrt{z}) - \cos(t \lambda) \right| \le 2$ for all $z \in [0, 1]$ and $t \in \R$, it follows from the spectral theorem that $w(t)$ given by \eqref{eq:functional_solution_var} is uniformly bounded in $H^{-1}(\Omega)$ for all $t$. Therefore the solution to the internal waves equation \eqref{eq:internal_waves} given by $u(t) = \Delta_\Omega^{-1}w(t)$ is uniformly bounded in $H^1_0(\Omega)$ for all $t$.  
\end{proof}

\begin{Remark}
    In fact, it is easy to see that smoothness of $f$ is not required. It suffices to have $f \in H^N(\Omega)$ for sufficiently large $N > 0$ depending on the constants in Definition \ref{def:diophantine} of Diophantine numbers. The proof of Lemma \ref{lem:cohom} gives explicit estimates for the regularity of the solution to the cohomological equation. This means that the boundary traces $V_\pm$ from Proposition \ref{prop:right_inverse} lies in $H^{N - k}(\partial \Omega)$ for some $k$ depending on the Diophantine constants. Since the critical points of the coordinate functions $y_\pm$ restricted to the boundary are nondegenerate and $V_\pm$ are invariant under pullback by $\gamma^\pm$ respectively, it follows that $v_\pm \in H^{N/2 - k}(\Omega)$ for a possibly different $k$ depending only on the Diophantine constants.
\end{Remark}

\subsection{Spectral estimate near \texorpdfstring{$\lambda^2$}{}}
We now use the right inverse $R(\lambda)$ in Proposition~\ref{prop:right_inverse} to obtain bounds on the spectral measure near $\lambda$. 
\begin{proof}[Proof of Theorem \ref{thm:ergodic}(c)]
    Let $f \in C^\infty(\overline \Omega) \subset H^{-1}(\Omega)$. The spectral measure $d\mu_f$ satisfies
    \[\int \varphi \,d \mu_f = \langle \varphi(P) f, f \rangle.\]
    Put $f_k = (\Delta_\Omega R(\lambda))^k f$. Note that $f_k \in C^\infty(\overline \Omega)$ and $(P - \lambda^2)^k f_k = f$. Therefore, 
    \[\left |\int_{\lambda^2 - \epsilon}^{\lambda^2 + \epsilon} \, d\mu_f \right| = \left|\int_{\lambda^2 - \epsilon}^{\lambda^2 + \epsilon} (x - \lambda^2)^{2k} \, d\mu_{f_k}(x) \right| \le C_k \epsilon^{2k}\]
    as desired.
\end{proof}

\section{Examples}
In this section we give the explicit computations related to the spectrum of the square and the disk, from which Theorem \ref{thm:ellipse} follows immediately. Note that the square does not satisfy the hypothesis of Theorem \ref{thm:ergodic} since the boundary is not smooth. Nevertheless, the conclusions of Theorem \ref{thm:ergodic} holds, and we verify this directly. 

\subsection{The square}
We consider the square domain $\Omega = [0, 1] \times [0, 1]$. Clearly the Fourier modes provide a basis of eigenfunctions and it is easy to see that the eigenvalues are dense in $[0, 1]$, so Theorem \ref{thm:ellipse} for the square is clear.

The square thus has the advantage that (\ref{eq:internal_waves}) can be solved directly in Fourier series, so we can verify the contents of Theorem \ref{thm:ergodic} for the square domain, despite it having corners. 

The chess billiard flow on the square is the same as the standard billiard flow, so the rotation number function $\mathbf r(\lambda)$ defined in (\ref{eq:rot_num_def}) is smooth and can be written down explicitly. It is given by
\begin{equation*}
    \mathbf r(\lambda) = \frac{\lambda}{\sqrt{1 - \lambda^2} + \lambda}.
\end{equation*}
See \cite{RSI} for the full derivation. We can formally write
\[u(t, x) = \sum_{\mathbf k \in \N^2} \hat u(t, \mathbf k) \sin(\pi k_1 x_1) \sin (\pi k_2 x_2) \]
where $\mathbf k = (k_1, k_2) \in \N^2$. Only in this subsection, we use the hat to denote Fourier transform with respect to the Dirichlet sine basis. If $u(t, x)$ is a solution to (\ref{eq:internal_waves}), the coefficients must satisfy the periodically driven harmonic oscillator equation
\begin{equation}\label{eq:f_coeffs}
    \begin{gathered}
        -\pi^2 (k_1^2 + k_2^2) \partial_t^2 \hat u(t, \mathbf k) + \pi^2 k_2^2 \hat u(t, \mathbf k) = \hat f(\mathbf k) \cos \lambda t, \\
    \text{where} \quad \hat f(\mathbf k) = \int_{[0, 1]^2} f(x) \sin (\pi k_1 x_1) \sin(\pi k_2 x_2)\, dx_1 dx_2
    \end{gathered}
\end{equation}
This has solution 
\begin{equation}\label{eq:square_solution}
    \hat u (t, \mathbf k) = \frac{\hat f (\mathbf k)}{\lambda^2 k_1^2 - (1 - \lambda^2) k_2^2} \left[\cos(\lambda t) - \cos \left( \frac{k_2}{|\mathbf k|} t \right) \right]
\end{equation}
If $\mathbf r(\lambda)$ is Diophantine, then the result of Theorem \ref{thm:ergodic} holds. In fact, in this case, we get that $u(t) \in C^\infty(\Omega)$ uniformly in all the seminorms for all $t \in \R$. Indeed, if $\mathbf r(\lambda)$ is Diophantine, then there exist constants $c,\, \beta > 0$ such that
\[|q \cdot \mathbf r(\lambda) - p| \ge \frac{c}{q^{1 + \beta}}\]
for any $p \in \Z$ and $q \in \N$. Rewriting this condition in terms of $\lambda$, we find that 
\[|(q - p) \lambda - p\sqrt{1 - \lambda^2}| \ge \frac{c}{q^{1 + \beta}}\]
where $c$ is a possibly different constant. Put $q = k_1 + k_2$ and $p = k_2$ to find that 
\begin{equation}\label{eq:sq_diophantine_est}
    |k_1 \lambda - k_2 \sqrt{1 - \lambda^2}| \ge \frac{c}{q^{1 + \beta}} \gtrsim \frac{1}{|\mathbf k|^{1 + \beta}}.
\end{equation}
where the hidden constant is independent of $\mathbf k$. We clearly have 
\begin{equation}\label{eq:sq_est_2}
    |k_1 \lambda + k_2 \sqrt{1 - \lambda^2}| \ge 1,
\end{equation}
so combining (\ref{eq:sq_diophantine_est}) and (\ref{eq:sq_est_2}) yields
\begin{equation}\label{eq:combined_sq_est}
    |\lambda^2 k_1^2 - (1 - \lambda^2)k_2^2| \ge \frac{c}{|\mathbf{k}|^{1 + \beta}}.
\end{equation}
for a possibly different constant $c > 0$. Since $f \in \CIc(\Omega; \R)$, $\hat f(\mathbf k)$ is rapidly decreasing in $\mathbf k$, which tempers the denominator in (\ref{eq:square_solution}) to give uniform smoothness of $u(t)$ in time.  

The above analysis also exhibits the spectral result part (c) of Theorem \ref{thm:ergodic}. Note that $\kappa(\mathbf k)\sin(\pi k_1 x_1) \sin(\pi k_2 x_2)$, $\mathbf k \in \N^2$ form a complete orthonormal basis for $H^{-1}(\Omega)$, where $\kappa(\mathbf k) = \mathcal O(|k|)$ are simply normalizing constants. This basis consists of eigenfunctions with eigenvalues $\frac{k_1}{k_1 + k_2}$ for the operator $P$ defined in (\ref{eq:evolution_problem}). In particular, $\sin(\pi k_1 x_1) \sin(\pi k_2 x_2)$ has eigenvalue $\frac{k_2^2}{k_1^2 + k_2^2}$. If $\mathbf r(\lambda)$ is Diophantine, then (\ref{eq:combined_sq_est}) gives a characterization of the eigenvalues near $\lambda^2$:
\begin{equation*}
    \left|\frac{k_2^2}{k_1^2 + k_2^2} - \lambda^2\right| \le \epsilon \implies \frac{1}{|\mathbf k|^{3 + \beta}} \gtrsim \epsilon^{-1}
\end{equation*}
Therefore the spectral measure $\mu_{f, f}$ satisfies part (c) of Theorem \ref{thm:ergodic} near $\lambda^2$. Indeed, using the fact that the coefficients $\hat f(k)$ of $f \in \CIc(\Omega)$ defined in (\ref{eq:f_coeffs}) are rapidly decreasing, we have
\[\mu_{f, f}((\lambda^2 - \epsilon, \lambda^2 + \epsilon)) \lesssim \sum_{\mathbf k : \big|\frac{k_2}{k_1^2 + k_2^2} - \lambda^2 \big| \le \epsilon^{-1}} \hat f(\mathbf k)^2 \kappa(\mathbf k)^{-2} < C_d \epsilon^d \]
for any $d \in \N$. See Figure \ref{fig:spectral_bound}.

Finally, we mention that by tilting the square by $\eta$, the set of $\lambda$ for which $\mathbf \lambda$ is Diophantine is no longer a full measure set. More specifically, we consider the square domain specified by the vertices 
\[\big\{(0, 0), \, (\cos \eta, \sin \eta), \, (-\sin \eta, \cos \eta), \, \sqrt{2}(\cos(\eta + \tfrac{\pi}{4}), \sin(\eta + \tfrac{\pi}{4})) \big \}.\] 
Then graph of $\mathbf r(\lambda)$ is constant near values of $\lambda$ for which $\mathbf r(\lambda)$ is rational. See Figure~\ref{fig:rot_num} for an illustration and \cite[\S2.5]{DWZ} for details. 

\begin{figure}
    \centering
    \includegraphics[scale = 0.35]{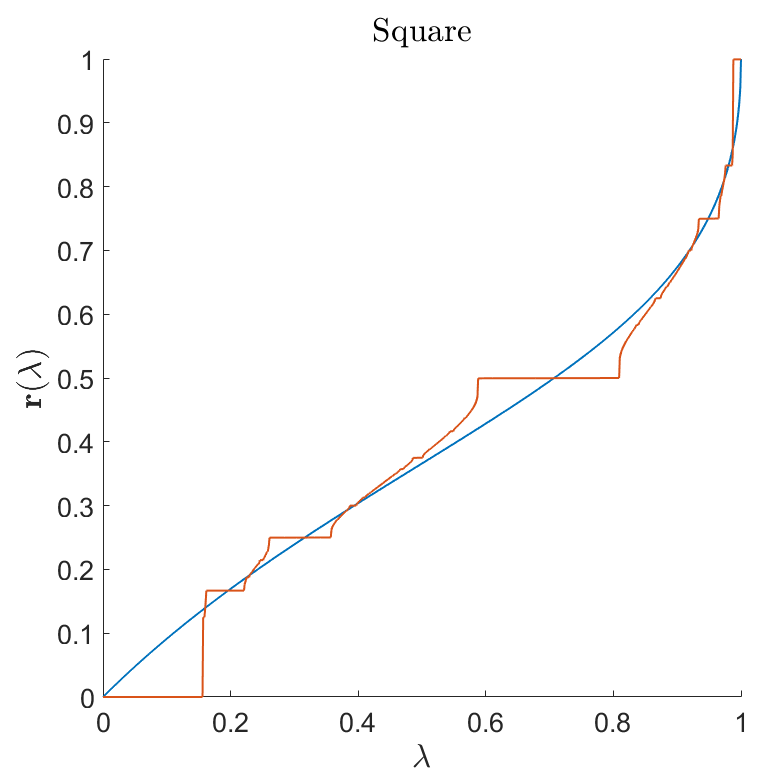}
    \includegraphics[scale = 0.35]{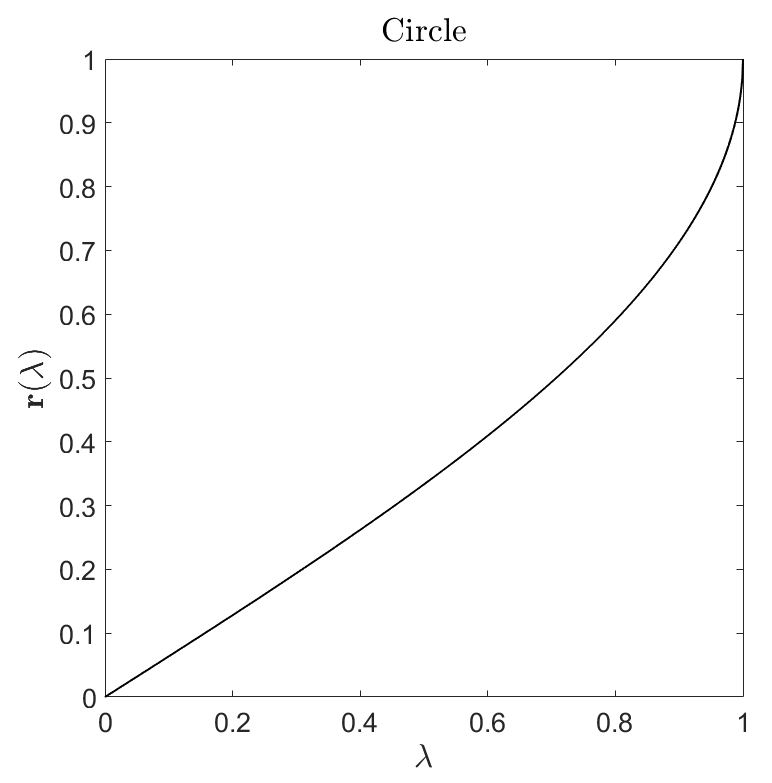}
    \caption{Graphs of the rotation number function $\mathbf r(\lambda)$ (defined in~\eqref{eq:rot_num_def}) for various domains. \textbf{Left:} The blue curve is for the untilted square $[0, 1]^2$. The orange curve is the square tilted by $\eta = \pi/20$. Note that it has rational plateaus. \textbf{Right:} The domain is a disk, so the boundary is a round circle.}
    \label{fig:rot_num}
\end{figure}

\subsection{The disk} \label{sec:disk}

We finally consider the case when $\Omega = \mathbb D$ is the unit disk. This has previously been
studied in~\cite[Section~9]{Aleksandryan_60}. The 3-dimensional case of a triaxial ellipsoid has been studied in~\cite{C-V}, following the physics work of~\cite{IJW15, BR17}.


We may assume $\mathbb D$ is centered at origin. Parameterize the boundary $\partial \mathbb D$ counterclockwise by arclength with $\phi = 0$ being the point $(1, 0)$. The boundary is then identified with the circle $\R/2 \pi \Z$. This is a different convention from the previous sections, and we switch conventions to avoid carrying factors of $2 \pi$. The mod 1 rotation number as defined in \ref{eq:rot_num_def} is given by 
\begin{equation}
    \mathbf r(\lambda) = 1 - \frac{2}{\pi} \arccos(\lambda),
\end{equation}
see \cite[\S 2.2.2]{Zhenhao} for details and Figure \ref{fig:rot_num}. It will be convenient to rescale the rotation number and define
\begin{equation}
    \alpha(\lambda) = \frac{\pi}{2} \mathbf r(\lambda).
\end{equation}
We henceforth drop $\lambda$ from the notation when there is no ambiguity. The terms of the factorization \eqref{eq:dual_factor} and the chess billiard map take the explicit forms
\begin{equation}\label{eq:circle_factor}
    \ell^\pm(x_1, x_2) = x_1 \cos \alpha \pm x_2 \sin \alpha, \qquad b(\phi) = \phi + 4 \alpha.
\end{equation}
We will provide an explicit complete basis of eigenfunctions. Recall from \eqref{eq:dual_factor} that $\ell^+(x, \lambda) \ell^-(x, \lambda)$ is dual to the symbol of $P(\lambda)$, so $u \in H^1_0(\Omega)$ is a solution to the eigenvalue problem if and only if
\begin{equation}\label{eq:pm_decomp}
    \begin{gathered}
        u(x) = u_+(\ell^+(x)) + u_-(\ell^-(x)) \quad \text{and} \\
        U_+ + U_- = 0 \quad \text{where} \quad U_+ := (u_+\circ \ell^+)|_{\partial \Omega}, \quad U_- := (u_- \circ \ell^-)|_{\partial \Omega}.
    \end{gathered}
\end{equation}
for some $u_\pm \in C^0$ and $u_{\pm} \circ \ell^\pm \in H^1(\Omega)$. See Lemma \ref{lem:decomp} for discussion of the regularity of $u_\pm$. Using the explicit formulas \eqref{eq:circle_factor}, we can compute 
\begin{equation}\label{eq:U_properties}
    \begin{gathered}
        U_\pm(\phi) = u_\pm(\cos \phi \cos \alpha \pm \sin \phi \sin \alpha) = u_\pm(\cos(\phi \mp \alpha)), \\
        U_\pm(\phi) = U_\pm(\phi + 4 \alpha)
    \end{gathered}
\end{equation}

Let $N \in \N$ and $k \in \{1, \dots, N - 1\}$. Then define
\[\alpha_{k,N} := \alpha(\lambda_{k, N}) = \frac{\pi}{2} \frac{k}{N} \quad \text{where} \quad
    \lambda_{k,N} \in (0, 1) \quad \text{uniquely satisfies} \quad \mathbf r(\lambda_{k, N}) = \frac{k}{N}.\]
We wish to construct a solution to every eigenvalue problem $P(\lambda_{k, N})u = 0$. Collecting the symmetries in \eqref{eq:pm_decomp} and \eqref{eq:U_properties}, we must have
\[U_+(-\phi + 2 \alpha_{k, N})=U_+(\phi)=U_+(-\phi + 2 \alpha_{k, N}), \quad U_- = - U_+.\]
$U_+$ is thus invariant under the action of a dihedral group, so we consider the Fourier modes on an interval of the boundary which is a fundamental domain, and we are led to functions of the form
\[U_\pm(\phi) = (\pm 1)^{k + 1} \cos(N(\phi \mp \alpha_{k, N})),\]
which precisely satisfies the conditions given in \eqref{eq:U_properties}. 
Let $T_N$ be the Chebyshev polynomials, which are defined by $T_N(\cos \phi) = \cos(N\phi)$. Therefore, we have solutions to the eigenvalue problem \eqref{eq:eig_problem} at $\lambda = \lambda_{k, N}$ given by 
\begin{equation}\label{basis}
\begin{gathered}
    u_{k, N}(x_1, x_2) = T_N(x_1 \cos \alpha_{k, N} + x_2 \sin \alpha_{k, N}) - (-1)^k T_N(x_1 \cos \alpha_{k, N} - x_2 \sin \alpha_{k, N}) \\
    \text{for every} \quad N \in \N, \quad k \in \{1, \dots, N - 1\}.
\end{gathered}
\end{equation}
The above forms a complete basis of $H_0^1(\mathbb D)$ since for every $N$, the set $\{u_{k, N}\}_{k = 1}^{N - 1}$ consists of $N - 1$ linearly independent degree $N$ polynomials vanishing at the boundary. Linear independence follows immediately from the fact that $\Delta u_{k, N}$ is an eigenfunction of $P$ with eigenvalue $\lambda_{k, N}$. See Figure \ref{fig:eigfuncs}. Furthermore, we see that there are infinitely many solutions to the eigenvalue problem \eqref{eq:eig_problem} for every $\lambda$ such that $\mathbf r(\lambda)$ is rational, since there are infinitely many ways to represent a rational number as $k/N$.

The explicit formulas \eqref{basis} for the solutions of the eigenvalue problem directly completes the proof of Theorem \ref{thm:ellipse} for the unit disk. The proof of Theorem \ref{thm:ellipse} for ellipses and rectangles is now a purely geometric problem. 
\begin{proof}[Proof of Theorem \ref{thm:ellipse}]
Let $\Omega$ be an ellipse. Then there exists a symmetric nondegenerate $2 \times 2$ matrix $A$ and $v \in \R^2$ such that $\Omega = A\mathbb D + v$ where $\mathbb D$ is the unit disk. Then observe that there exists a family of rotation matrices $R(\lambda)$ that depends smoothly on $\lambda$ such that under the coordinate change
\[y = R(\lambda) A^{-1}(x - v),\]
$P(\lambda)$ on $\Omega$ becomes $c(\lambda) P(\sigma(\lambda))$ on $\mathbb D$, for some $c(\lambda) > 0$ and $\sigma:[0, 1] \to [0, 1]$ smooth and monotonically increasing with $\sigma(0) = 0$ and $\sigma(1) = 1$. The result then follows from the explicit basis of eigenfunctions corresponding to eigenvalues dense in  $(0, 1)$ given in \eqref{basis} for the circle. 

The rectangle case follows from the square case simply by a linear change of coordinates by a diagonal matrix independent of $\lambda$.
\end{proof}

\medskip\noindent\textbf{Acknowledgements.}
The authors would like to thank Semyon Dyatlov, Maciej Zworski, and Leo Maas for insightful discussions. They would also like to thank Matthew Colbrook for providing the numerical data used in Figure \ref{fig:spectral_bound}. The second author is partially supported by Semyon Dyatlov's NSF CAREER grant DMS-1749858.

\bibliographystyle{alpha}
\bibliography{internal_waves.bib}

\end{document}